\newtheorem{thm}{Theorem}[section]
\newtheorem{lem}[thm]{Lemma}
\newcommand{\R}{{\mathbb{R}}}
\newcommand{\Z}{{\mathbb{Z}}}
\newcommand{\1}{\partial}
\newcommand{\2}{\overline}
\newcommand{\3}{\varepsilon}
\newcommand{\4}{\widetilde}
\newcommand{\5}{\underline}
\begin{document}
\title{Growth rate and extinction rate of a reaction diffusion equation
with a singular nonlinearity}
\author{Kin Ming Hui\\
Institute of Mathematics, Academia Sinica,\\
Nankang, Taipei, 11529, Taiwan, R. O. C.}
\date{Aug 6, 2008}
\smallbreak \maketitle
\begin{abstract}
We prove the growth rate of global solutions of the equation
$u_t=\Delta u-u^{-\nu}$ in $\R^n\times (0,\infty)$, $u(x,0)=u_0>0$ in $\R^n$,
where $\nu>0$ is a constant. More precisely for any $0<u_0\in C(\R^n)$
satisfying $A_1(1+|x|^2)^{\alpha_1}\le u_0\le A_2(1+|x|^2)^{\alpha_2}$
in $\R^n$ for some constants 
$1/(1+\nu)\le\alpha_1<1$, $\alpha_2\ge\alpha_1$ and $A_2\ge A_1=
(2\alpha_1(1-\3)(n+2\alpha_1-2))^{-1/(1+\nu)}$ where $0<\3<1$ is a constant, 
the global solution $u$ exists and satisfies
$A_1(1+|x|^2+b_1t)^{\alpha_1}\le u(x,t)\le A_2(1+|x|^2+b_2t)^{\alpha_2}$ 
in $\R^n\times (0,\infty)$ where $b_1=2(n+2\alpha_1-2)\3$ and $b_2=2n$ 
if $0<\alpha_2\le 1$ and $b_2=2(n+2\alpha_2-2)$ if $\alpha_2>1$.
When $0<u_0\le A(T_1+|x|^2)^{1/(1+\nu)}$ in $\R^n$ for some constant 
$0<A<((1+\nu)/2n)^{1/(1+\nu)}$, we prove that $u(x,t)
\le A(b(T-t)+|x|^2)^{1/(1+\nu)}$ in $\R^n\times (0,T)$
for some constants $b>0$ and $T>0$. Hence the solution extincts at 
the origin at time $T$. We also find various other conditions for 
the solution to extinct in a finite time and obtain the corresponding
decay rate of the solution near the extinction time. 
\end{abstract}

\vskip 0.2truein

Key words: Growth rate, extinction rate, reaction diffusion equation,
singular nonlinearity

Mathematics Subject Classification: Primary 35B40 Secondary 35B05, 35K50, 
35K20
\vskip 0.2truein
\setcounter{equation}{0}
\setcounter{section}{-1}

\section{Introduction}
\setcounter{equation}{0}
\setcounter{thm}{0}

In this paper we will study the growth rate of global solutions and 
behaviour near extinction time of the solution of the following 
Cauchy problem

\begin{equation}
\left\{\begin{aligned}
u_t=&\Delta u-u^{-\nu}\quad\mbox{ in }\R^n\times (0,\infty)\\
u(x,0)=&u_0\qquad\qquad\mbox{ in }\R^n
\end{aligned}\right.
\end{equation}
for any $n\ge 1$ where $\nu>0$ is a constant. Equation (0.1) arises in many 
physical models. When $\nu=2$, (0.1) appears in the modelling of 
Micro-electromechanical systems (MEMS) which 
has many applications such as accelerometers for airbag deployment in cars, 
inkjet printer heads, and the device for the protection of hard disk, etc. 
Interested readers can read the book, Modeling MEMS and NEMS \cite{PB}, 
by J.A.~Pelesko and D.H.~Berstein for the mathematical modeling and various 
applications of MEMS devices. 

Recently there are a lot of research on (0.1) by N.~Ghoussoub, Y.~Guo,
Z.~Pan and M.J.~Ward \cite{GG1}, \cite{GG2}, \cite{GPW}, 
N.I.~Kavallaris, T.~Miyasita and T.~Suzuki \cite{KMS}, 
F.~Lin and Y.~Yang \cite{LY}, L.~Ma, Z.~Guo and J.~Wei \cite{GW1}, 
\cite{GW2}, \cite{MW}, etc. on the various properties of the 
solutions of the equation. Note that the stationary solution
of (0.1) is 
\begin{equation}
\Delta u=u^{-\nu}\quad\mbox{ in }\R^n
\end{equation}
is studied extensively in \cite{GW1}. An equation similar to (0.2) 
arising from the motion of thin films of viscous fluid is also studied 
by H.~Jiang and W.M.~Ni in \cite{JN}.

In \cite{GW1} Z.~Guo and J.~Wei constructed  solutions of (0.1) using a fixed 
point argument. Then by carefully studying the properties of the solutions 
of (0.2) Z.~Guo and J.~Wei \cite{GW1} proved that if $\nu>0$, $n\ge 3$, 
\begin{align*}
u_0\in C_{LB}(\R^n)=&\{\phi\in C(\R^n):\inf_{\R^n}\phi>0\mbox{ and }
\exists\alpha\le 0, A>0\mbox{ and }M>0\mbox{ such that }\\
&\phi (x)\le A|x|^{\alpha}\quad\forall |x|\ge M\}
\end{align*}
and
\begin{equation*}
u_0\ge\gamma\biggl[\frac{2}{\nu+1}\biggl(n-2+\frac{2}{\nu+1}\biggr)
\biggr]^{-\frac{1}{1+\nu}}|x|^{\frac{2}{1+\nu}}
\end{equation*}
for some constant $\gamma>1$,
then (0.1) has a unique global solution
$u$ satisfying
\begin{equation*}
u(x,t)\ge\gamma\biggl[\frac{2}{\nu+1}\biggl(n-2+\frac{2}{\nu+1}\biggr)
\biggr]^{-\frac{1}{1+\nu}}|x|^{\frac{2}{1+\nu}}
\end{equation*}
and
\begin{equation*}
u(x,t)\ge (\nu+1)^{\frac{1}{1+\nu}}(\gamma^{\nu+1}-1)^{\frac{1}{1+\nu}}
t^{\frac{1}{1+\nu}}.
\end{equation*}
They also used a contradiction argument to prove the finite extinction 
property of the solution of (0.1) when $n\ge 3$ and the initial value 
$u_0$ is bounded above by the supersolution of (0.2).

In this paper by approximating the solution of (0.1) by solutions of (0.1)
in bounded domains we prove that for any $n\ge 1$ if the initial value 
$u_0$ satisfies
\begin{equation}
A_1(1+|x|^2)^{\alpha_1}\le u_0\le A_2(1+|x|^2)^{\alpha_2}\quad\mbox{ in }
\R^n
\end{equation}
for some constants $1/(1+\nu)\le\alpha_1<1$, $\alpha_2\ge\alpha_2$, $\nu>0$, 
$A_2\ge A_1$ where
\begin{equation}
A_1=[2\alpha_1(1-\3)(n+2\alpha_1-2)]^{-\frac{1}{1+\nu}}
\end{equation}
for some constant $0<\3<1$, then 
\begin{equation}
A_1(1+|x|^2+b_1t)^{\alpha_1}\le u(x,t)\le A_2(1+|x|^2+b_2t)^{\alpha_2}
\end{equation}
in $\R^n\times (0,\infty)$ where
\begin{equation}
b_1=2(n+2\alpha_1-2)\3
\end{equation}
and
\begin{equation}
b_2=\left\{\begin{aligned}
&2n\qquad\qquad\qquad\mbox{ if }1/(1+\nu)\le\alpha_2\le 1\\
&2(n+2\alpha_2-2)\quad\mbox{ if }\alpha_2>1.
\end{aligned}\right.
\end{equation}
Finite time extinction of solutions of (0.1) when the initial value
$u_0$ is bounded above by the supersolution of (0.2) has been proved
in \cite{GW1}. However there is no estimate for the extinction rate
and extinction time of the solution in \cite{GW1}. In this paper
we will prove the extinction rate and find an explicit upper bound 
for the extinction time of the solutions of (0.1) when
$u_0\in C(\R^n)$, $n\ge 1$, satisfies either
\begin{equation}
0<u_0\le A(T_1+|x|^2)^{\frac{1}{1+\nu}}\quad\forall x\in\R^n
\end{equation}
for some constants 
\begin{equation}
T_1>0\quad\mbox{ and }\quad 0<A<\biggl (\frac{1+\nu}{2n}
\biggr )^{\frac{1}{1+\nu}},
\end{equation}
or
\begin{equation*}
0<u_0\in C(\R^n)\cap L^{\infty}(\R^n).
\end{equation*}
We also find various other conditions on the initial value for
the solution of (0.1) to extinct in a finite time and prove the 
corresponding extinction rate.

The plan of the paper is as follows. In section 1 we will construct
the solution of (0.1) by approximating the solution of (0.1) by solutions
of (0.1) in bounded domains. We will construct explicit supersolutions and
subsolutions of (0.1) and use them to prove that the solutions in bounded
domain have the bounds we want. Then by an approximation argument the 
global solution will have the same upper and lower bounds. 
In section 2 we will prove the 
extinction rate and extinction time of solutions of 
(0.1) under various conditions on the initial value.

We start will some definitions. For any $R>0$, $T>0$, let 
$B_R=\{x\in\R^n:|x|<R\}$, $Q_R^T=B_R\times (0,T)$ and $Q_R=B_R\times 
(0,\infty)$.
For any $f\in C(\1 B_R\times (0,T))\cap L^{\infty}(B_R\times (0,T))$
we say that $0<u\in C([0,T];L^1(B_R))\cap L^{\infty}(Q_R^T)$ is a 
continuous weak solution (subsolution, supersolution respectively) of
\begin{equation}
\left\{\begin{aligned}
u_t=&\Delta u-u^{-\nu}\quad\mbox{ in }Q_R^T\\
u=&f\qquad\qquad\quad\mbox{on }\1B_R\times (0,T)\\
u(x,0)=&u_0\qquad\qquad\mbox{ in }B_R
\end{aligned}\right.
\end{equation}
if $u$ satisfies the integral identity
\begin{equation*}
\iint_{B_R\times [t_1,t_2]}(u\eta_t+u\Delta\eta-u^{-\nu}\eta)\,dx\,dt
=\int_{t_1}^{t_2}\int_{\1 B_R}f\frac{\1\eta}{\1 N}\,d\sigma\,dt
+\left.\int_{B_R}u\eta\,dx\right|_{t=t_1}^{t=t_2}
\end{equation*}
($\ge, \le$ respectively) for any $0<t_1<t_2<T$, 
$\eta\in C^{\infty}(\2{Q_R^T})$ such that $\eta=0$ on $\1 B_R\times [0,T]$
where $\1/\1 N$ is the exterior normal derivative on $\1 B_R\times (0,T)$.

We say that $u$ is a solution (subsolution, supersolution respectively)
of (0.10) if $0<u\in C^{2,1}(Q_R^T)\cap C(\2{B}_R\times (0,T))$ satisfies
($\le, \ge$ respectively) 
\begin{equation}
u_t=\Delta u-u^{-\nu}
\end{equation}
in $Q_R^T$ in the classical sense with $u=f$ ($\le f, \ge f$ respectively) on 
$\1 B_R\times (0,T)$ and 
\begin{equation*}
\lim_{t\to 0}\int_{B_R}|u-u_0|\,dx=0.
\end{equation*}
We say that $u$ is a solution (subsolution, supersolution respectively)
of (0.1) in $\R^n\times (0,T)$ if $u\in C^{2,1}(\R^n\times (0,T))$ 
satisfies (0.11) ($\le, \ge$ respectively) in $\R^n\times (0,T)$ in the 
classical sense, $u(\cdot,t)$ converges to $u_0$ in
$L_{loc}^1(\R^n)$ as $t\to 0$, and for any $0<\delta<T$ there exist 
constants $C_1=C_1(\delta,T)>0$, $C_2=C_2(\delta,T)>0$, such that
\begin{equation*}
C_1(1+|x|^2)^{-\frac{1}{1+\nu}}\le u_2(x,t)\le C_2e^{C_2|x|^2} 
\quad\mbox{ in }\R^n\times (0,T-\delta).
\end{equation*}
We say that $u$ is a solution (subsolution, supersolution respectively)
of (0.1) in $\R^n\times (0,\infty)$ if $u\in C^{2,1}(\R^n\times (0,\infty))$
is a solution of (0.1) in $\R^n\times (0,T)$ 
(subsolution, supersolution respectively) for any $T>0$.

Let $G_R(x,y,t)$, $x,y\in B_R$, $t>0$, be the Dirichlet Green function of 
the heat equation in $Q_R$. That is for any $y\in B_R$,
$$\left\{\aligned 
&\1_tG_R=\Delta_xG_R\quad\text{ in }Q_R\\
&G_R(x,y,t)=0\quad\forall x\in\1 B_R, t>0\\
&\lim_{t\to 0}G_R(x,y,t)=\delta_y
\endaligned\right.
$$
where $\delta_y$ is the delta mass at $y$. By the maximum principle,
$$
0\le G_R(x,y,t)\le\frac{1}{(4\pi)^{\frac{n}{2}}}e^{-|x-y|^2/4t}
\quad\forall x,y\in B_R,t>0.
$$
For any $K\subset R^n\times (0,\infty)$, $0<\beta<1$, let
$$
C^{2,1}(K)=\{f: f,f_t, f_{x_i}, f_{x_i,x_j}\in C(K)\quad\forall 
i,j=1,2,\dots,n\}
$$
and let $C^{2+\beta,1+(\beta/2)}(K)$ denote the class of
all functions $f\in C^{2,1}(K)$ such that
$$\left\{\aligned
&|f_{x_i,x_j}(x_1',t_1')- f_{x_i,x_j}(x_2',t_2')|
\le C(|x_1'-x_2'|^{\beta}+|t_2'-t_1'|^{\beta/2})\}\quad\forall
(x_1',t_1'), (x_2',t_2')\in K\\
&|f_t(x_1',t_1')- f_t(x_2',t_2')|
\le C(|x_1'-x_2'|^{\beta}+|t_2'-t_1'|^{\beta/2})\}\qquad\qquad\forall
(x_1',t_1'), (x_2',t_2')\in K\endaligned\right. 
$$
holds for some constant $C>0$ and any $i,j=1,2,\cdots ,n$.

\section{Existence and growth rate of global solutions}
\setcounter{equation}{0}
\setcounter{thm}{0}

In this section we will construct explicit supersolutions and
subsolutions of (0.1). We will also give a different proof of the
existence of global solutions of (0.1) and prove the growth rate estimates
of the global solutions of (0.1). 

We first observe that by an argument similar to the proof of Theorem 1.1 
of \cite{H2} (cf. Lemma 2.3 of \cite{DK}) we have the following theorem.

\begin{lem}
Let $u_{0,1}, u_{0,2}\in L^1(B_R)$ be such that $0\le u_{0,1}
\le u_{0,2}$ a.e. on $B_R$. Let $f_1,f_2\in C(\1 B_R\times (0,T))\cap 
L^{\infty}(\1 B_R\times (0,T))$ be 
such that $0<f_1\le f_2$ on $\1 B_R\times (0,T)$. Let $u_1$, $u_2$, be 
continuous weak subsolution and supersolution of (0.10) with initial 
value $u_0=u_{0,1}, u_{0,2}$, and $f=f_1,f_2$ respectively. Suppose
there exists a constant $\delta>0$ such that $u_1\ge\delta$ and 
$u_2\ge\delta$ on $Q_R^T$. Then
\begin{equation*}
u_1(x,t)\le u_2(x,t)\quad\forall x\in\2{B}_R,0\le t<T.
\end{equation*}
\end{lem}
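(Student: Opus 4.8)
The plan is to prove this comparison principle via the standard approach for weak solutions: test the integral identities against a suitable sequence of test functions approximating the sign function of $(u_1-u_2)_+$, and exploit the monotonicity of the nonlinearity. First I would set $w=u_1-u_2$ and subtract the two integral identities. Since $u_1$ is a subsolution and $u_2$ a supersolution, the difference of the defining inequalities yields, for smooth admissible $\eta\ge 0$ vanishing on $\partial B_R\times[0,T]$,
\begin{equation*}
\iint_{B_R\times[t_1,t_2]}\left(w\,\eta_t+w\,\Delta\eta-(u_1^{-\nu}-u_2^{-\nu})\eta\right)dx\,dt
\ge\int_{t_1}^{t_2}\int_{\partial B_R}(f_1-f_2)\frac{\partial\eta}{\partial N}\,d\sigma\,dt
+\left.\int_{B_R}w\,\eta\,dx\right|_{t=t_1}^{t=t_2}.
\end{equation*}
The boundary integral is handled by the hypothesis $f_1\le f_2$ together with the sign of $\partial\eta/\partial N$ once $\eta$ is chosen nonnegative (so that its outward normal derivative is $\le 0$ on $\partial B_R$). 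The crucial algebraic fact is that $t\mapsto -t^{-\nu}$ is increasing on $(0,\infty)$, so on the set where $w>0$ we have $u_1>u_2$, hence $u_1^{-\nu}<u_2^{-\nu}$, i.e.\ $-(u_1^{-\nu}-u_2^{-\nu})\ge 0$. The lower bound $u_1,u_2\ge\delta$ is exactly what makes $u_i^{-\nu}$ bounded and Lipschitz in the relevant range, so that $u_1^{-\nu}-u_2^{-\nu}=c(x,t)\,w$ for a bounded coefficient $c\le 0$.

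The heart of the proof is then a duality/regularization argument in the spirit of Theorem~1.1 of \cite{H2} and Lemma~2.3 of \cite{DK}. I would choose the test function $\eta$ to solve a backward dual parabolic problem: given a smooth $0\le\psi\le 1$ supported in $B_R\times(t_1,t_2)$, let $\eta$ solve $\eta_t+\Delta\eta+c\,\eta=-\psi$ in $B_R\times(t_1,t_2)$ with $\eta=0$ on the parabolic boundary $\partial B_R\times(t_1,t_2)$ and $\eta(\cdot,t_2)=0$, where $c\le 0$ is the bounded coefficient above. By the maximum principle for this dual problem one obtains $\eta\ge 0$, which validates the boundary-term sign discussed above and gives $0\le\eta\le C$. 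Substituting this $\eta$ collapses the left-hand volume integral to $-\iint w\,\psi$, and after sending $t_1\downarrow 0$ (using the initial condition $u_{0,1}\le u_{0,2}$, so the $t=t_1$ boundary term has the right sign in the limit) one arrives at $\iint_{B_R\times(0,t_2)} w\,\psi\,dx\,dt\le 0$ for every admissible $\psi\ge 0$. Since $\psi$ is arbitrary, $w\le 0$ a.e., i.e.\ $u_1\le u_2$.

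I expect the main obstacle to be the rigorous construction and control of the dual test function $\eta$, since the coefficient $c(x,t)=-(u_1^{-\nu}-u_2^{-\nu})/w$ is only bounded and measurable, not smooth, and $w$ may vanish. The standard remedy is to replace $c$ by a smooth bounded approximation $c_m$, solve the dual problem with $c_m$ to get a smooth admissible $\eta_m$ with uniform bounds $0\le\eta_m\le C$ (and uniform control of $\nabla\eta_m$ near the boundary, to handle the boundary term), pass to the inequality for each $m$, and then let $m\to\infty$ using dominated convergence; the uniform lower bound $\delta$ guarantees the approximating coefficients stay uniformly bounded so the limiting inequality survives. A secondary technical point is justifying the limit $t_1\downarrow 0$ using only $u_i\in C([0,T];L^1(B_R))$ and the initial data ordering, which is routine given the continuity in $L^1$. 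Since the statement explicitly invokes the analogous results in \cite{H2} and \cite{DK}, I would ultimately cite those for the delicate regularization estimates and present here only the adaptation: the sign of the nonlinear term and the role of the uniform lower bound $\delta$.
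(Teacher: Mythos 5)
Your overall architecture is the right one, and it is in fact the only ``proof'' the paper has: the paper proves nothing for Lemma 1.1, but defers to Theorem 1.1 of \cite{H2} and Lemma 2.3 of \cite{DK}, and the arguments there are exactly your scheme --- subtract the two integral identities, factor $u_1^{-\nu}-u_2^{-\nu}=c(x,t)\,w$ with $w=u_1-u_2$ and $c$ bounded (which is precisely where the hypotheses $u_1,u_2\ge\delta$ and $u_1,u_2\in L^{\infty}(Q_R^T)$ enter, giving $|c|\le\nu\delta^{-1-\nu}$), test against solutions of a backward dual problem with smoothed coefficients $c_m$, use $\eta\ge 0$ to give the lateral boundary term the favorable sign via $f_1\le f_2$ and $\partial\eta/\partial N\le 0$, and send $t_1\downarrow 0$ using the $C([0,T];L^1(B_R))$ continuity and $u_{0,1}\le u_{0,2}$.

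Two corrections, one of which is a real error in a key step as written. First, the sign in your dual problem is wrong: with your convention $u_1^{-\nu}-u_2^{-\nu}=c\,w$, $c\le 0$, the subtracted identity collapses via $\iint w\,(\eta_t+\Delta\eta-c\,\eta)\,dx\,dt$, so $\eta$ must solve $\eta_t+\Delta\eta-c\,\eta=-\psi$ (zeroth-order coefficient $-c\ge 0$), not $\eta_t+\Delta\eta+c\,\eta=-\psi$; with your choice the substitution leaves the uncontrolled remainder $-2\iint c\,w\,\eta\,dx\,dt$, in which $w$ has no sign, and the argument stalls. With the corrected equation everything survives: $\eta\ge 0$ by the maximum principle and $0\le\eta\le e^{\nu\delta^{-1-\nu}t_2}\,t_2\|\psi\|_{\infty}$ uniformly in $m$ since $|c_m|\le\nu\delta^{-1-\nu}$. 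Second, your ``crucial algebraic fact'' points the wrong way: $s\mapsto-s^{-\nu}$ being \emph{increasing} means the zeroth-order coefficient $-c$ in the inequality for $w$ is \emph{nonnegative}, i.e.\ the destabilizing sign, and the observation that $-(u_1^{-\nu}-u_2^{-\nu})\ge 0$ on $\{w>0\}$ buys you nothing. What actually carries the proof is only the boundedness of $c$ (a Gr\"onwall-type mechanism, visible in the exponential factor above), which your own remark about the Lipschitz bound coming from the lower bound $\delta$ already supplies; the monotonicity discussion should simply be dropped. A last small point: no quantitative control of $\nabla\eta_m$ near $\partial B_R$ is needed, since the lateral term is discarded by sign alone.
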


\begin{lem}
Let $\nu>0$, $u_0\in C(\2{B}_R)$ and $f\in C(\1 B_R\times (0,T_1))\cap 
L^{\infty}(\1 B_R\times (0,T))$. 
Suppose $\delta=\min(\min_{\2{B}_R}u_0,\inf_{\1 B_R\times (0,T_1)}f)>0$. Then 
there exists a constant $0<T<T_1$ such that (0.10) has a unique solution $u$ 
which satisfies
\begin{equation}
\frac{\delta}{2}\le u\le w_R\quad\forall x\in B_R, 
0<t<T
\end{equation}
and
\begin{align}
u(x,t)=&\int_{B_R}G_R(x,y,t)u_0(y)\,dy
-\int_0^t\int_{\1 B_R}\frac{\1 G_R}{\1 N_y}
(x,y,t-s)f(y,s)\,d\sigma(y)\,ds\nonumber\\
&\qquad -\int_0^t\int_{B_R}G_R(x,y,t-s)u(y,s)^{-\nu}\,dy\,ds
\quad\forall x\in B_R, 0<t<T
\end{align}
where $\1/\1 N_y$ is the exterior normal derivative on $\1 B_R\times (0,T)$
and $w_R$ is the solution of problem
\begin{equation}
\left\{\begin{aligned}
w_t=&\Delta w\quad\mbox{ in }Q_R\\
w=&f\qquad\mbox{on }\1 B_R\times (0,\infty)\\
w(x,0)=&u_0\qquad\mbox{ in }B_R
\end{aligned}\right.
\end{equation}
\end{lem}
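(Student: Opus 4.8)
The plan is to construct the solution by a fixed-point argument applied to the integral equation (1.2), working on a short time interval $[0,T]$ where the lower bound keeps the singular term $u^{-\nu}$ under control. First I would introduce the linear parabolic solution $w_R$ of (1.3) with the same data $u_0$ and $f$; by the maximum principle and the positivity of $\delta$, $w_R\ge\delta>0$ on $\2{B}_R\times[0,T_1)$, and $w_R$ is continuous up to $t=0$ in the interior. The representation (1.2) can be rewritten as $u=\Phi(u)$, where the first two terms on the right-hand side are exactly $w_R$ (the Duhamel formula for the heat equation with Dirichlet data $f$ and initial data $u_0$), and the last term is the Duhamel contribution of the forcing $-u^{-\nu}$. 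Thus $\Phi(u)=w_R-\int_0^t\int_{B_R}G_R(x,y,t-s)\,u(y,s)^{-\nu}\,dy\,ds$.

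Next I would fix the complete metric space $X=\{u\in C(\2{B}_R\times[0,T]): \delta/2\le u\le w_R\}$ with the sup metric, and show that for $T$ small enough $\Phi$ maps $X$ into $X$ and is a contraction. For the invariance: since $G_R\ge 0$ and $u^{-\nu}>0$, the correction term is nonpositive, so $\Phi(u)\le w_R$ automatically; for the lower bound one estimates $\int_0^t\int_{B_R}G_R(x,y,t-s)u(y,s)^{-\nu}\,dy\,ds\le (\delta/2)^{-\nu}\int_0^t\int_{B_R}G_R\,dy\,ds\le (\delta/2)^{-\nu}\,t$, using $\int_{B_R}G_R(x,y,\tau)\,dy\le 1$, so choosing $T\le (\delta/2)^{\nu+1}/2$ forces $\Phi(u)\ge w_R-(\delta/2)^{-\nu}T\ge\delta-\delta/2=\delta/2$. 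For the contraction: the map $r\mapsto r^{-\nu}$ is Lipschitz on $[\delta/2,\infty)$ with constant $\nu(\delta/2)^{-\nu-1}$, so $|u_1(y,s)^{-\nu}-u_2(y,s)^{-\nu}|\le\nu(\delta/2)^{-\nu-1}\|u_1-u_2\|_\infty$, and integrating against $G_R$ as above yields $\|\Phi(u_1)-\Phi(u_2)\|_\infty\le\nu(\delta/2)^{-\nu-1}T\,\|u_1-u_2\|_\infty$, which is a contraction after possibly shrinking $T$. The Banach fixed-point theorem then gives a unique $u\in X$ solving (1.2), and (1.1) holds by construction.

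Finally I would upgrade the mild solution to a genuine solution of (0.10). Since $u$ lies between $\delta/2$ and $w_R$ and is continuous, the forcing $g:=-u^{-\nu}$ is a bounded continuous function on $\2{B}_R\times[0,T]$; parabolic Schauder/$L^p$ interior estimates applied to the Duhamel representation (1.2) give $u\in C^{2,1}(Q_R^T)$ solving $u_t=\Delta u+g=\Delta u-u^{-\nu}$ classically, with the Dirichlet data $f$ attained continuously and the initial data attained in $L^1(B_R)$ as $t\to 0$ (indeed in $C(\2{B}_R)$ since $u_0\in C(\2{B}_R)$ and the correction term tends to $0$). Uniqueness within the class satisfying $u\ge\delta/2$ follows either directly from the contraction on $[0,T]$, or from the comparison principle in Lemma 1.1, since any two such solutions are bounded below by the common constant $\delta/2$.

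The main obstacle is precisely the singular nonlinearity: the nonlinear term $u^{-\nu}$ is neither Lipschitz nor even bounded near $u=0$, so the fixed-point argument cannot be run on an unrestricted ball in $C(\2{Q_R^T})$. The key is to exploit the uniform lower bound $\delta=\min(\min_{\2{B}_R}u_0,\inf f)>0$ together with the maximum principle for $w_R$ to confine the iterates to the region $\{u\ge\delta/2\}$, where $r\mapsto r^{-\nu}$ is smooth and Lipschitz; this is exactly what makes $T$ depend on $\delta$ and forces $T<T_1$. Everything else is a routine application of Duhamel's formula and parabolic regularity.
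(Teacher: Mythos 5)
Your proposal is correct in substance but reaches the solution by a genuinely different mechanism than the paper. The paper runs exactly your Picard iteration $u_k=\Phi(u_{k-1})$ starting from $u_1\equiv\delta$, establishes the same bounds $\delta/2\le u_k\le w_R$ with the same choice $T=\min(T_1/2,(\delta/2)^{1+\nu})$, but then obtains convergence by \emph{compactness} rather than contraction: uniform $C^{2+\beta,1+(\beta/2)}$ bounds from parabolic Schauder estimates plus Ascoli and a diagonalization argument, which forces a two-case structure (first smooth data with the compatibility condition $u_0=f$ at $t=0$ on $\1 B_R$, then merely continuous data handled via an auxiliary sequence $\4{u}_k$ built from the constant data $\delta$). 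Your contraction argument collapses the two cases into one and gives uniqueness of the fixed point for free, at the cost of shrinking $T$ so that $\nu(\delta/2)^{-\nu-1}T<1$; the paper's route instead delivers the higher interior regularity of the limit directly from the Schauder bounds on the iterates. Two small repairs are needed in your version. First, $w_R$ need not be continuous at the parabolic corner $\1 B_R\times\{0\}$ (no compatibility between $u_0$ and $f$ is assumed, and $f$ is only given continuous for $t>0$), and $\Phi(u)$ inherits exactly this defect since the volume potential is continuous and vanishes at $t=0$; hence your space $X\subset C(\2{B}_R\times[0,T])$ is not invariant under $\Phi$. Work instead in $L^{\infty}(Q_R^T)$ with the sup norm, or with functions of the form $w_R+v$ where $v$ is continuous and vanishes at $t=0$ --- nothing else in the argument changes. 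Second, Schauder estimates require H\"older, not merely continuous, forcing, so your regularity upgrade needs one explicit bootstrap step: the volume potential of the bounded function $u^{-\nu}$ is locally H\"older continuous, hence $u$ and then (using $u\ge\delta/2$) $u^{-\nu}$ are locally H\"older, and only then does Schauder give $u\in C^{2,1}(Q_R^T)$ solving (0.11) classically; relatedly, the initial data is attained in $L^1(B_R)$, as the lemma's solution notion requires, but your parenthetical claim of convergence in $C(\2{B}_R)$ fails at the corner without compatibility. With these adjustments your proof is complete, and your appeal to Lemma 1.1 for uniqueness coincides with the paper's.
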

\begin{proof}
Since uniqueness of solution of (0.10) follows directly by Lemma 1.1,
we only need to prove existence of solution of (0.10).
Let $u_1(x,t)\equiv\delta$.
For any $k\ge 2$, let
\begin{align}
u_k(x,t)
=&\int_{B_R}G_R(x,y,t)u_0(y)\,dy-\int_0^t\int_{\1 B_R}\frac{\1 G_R}{\1 N_y}
(x,y,t-s)f(y,s)\,d\sigma(y)\,ds\nonumber\\
&\qquad -\int_0^t\int_{B_R}G_R(x,y,t-s)u_{k-1}(y,s)^{-\nu}\,dy\,ds.
\end{align}
Note that by standard parabolic theory \cite{LSU} the solution of (1.3) is 
given by 
\begin{equation}
w_R(x,t)=\int_{B_R}G_R(x,y,t)u_0(y)\,dy
-\int_0^t\int_{\1 B_R}\frac{\1 G_R}{\1 N_y}(x,y,t-s)f(y,s)\,d\sigma(y)\,ds.
\end{equation}
Then by (1.4) and (1.5),
\begin{equation}
u_k(x,t)\le w_R(x,t)\quad\forall |x|\le R,0<t<T_1,k\ge 2.
\end{equation}
By the maximum principle,
\begin{equation*}
w_R\ge\delta\quad\mbox{ in }Q_R^{T_1}.
\end{equation*} 
Let 
\begin{equation}
T=\min (T_1/2,(\delta/2)^{1+\nu}).
\end{equation} 
Then by (1.4) $\forall
(x,t)\in\2{Q_R^T}$,
\begin{equation*}
u_2(x,t)\ge\delta-(\delta/2)^{-\nu}\int_0^t\int_{B_R}G(x,y,t-s)\,dy\,ds
\ge\delta-(\delta/2)^{-\nu}t\ge\delta/2.
\end{equation*}
Suppose $u_k(x,t)\ge\delta/2$ for any $(x,t)\in\2{Q_R^T}$ for some
$k\ge 2$. Then by (1.4) for any $(x,t)\in\2{Q_R^T}$, 
\begin{equation*}
u_{k+1}(x,t)\ge\delta-(\delta/2)^{-\nu}\int_0^t\int_{B_R}G(x,y,t-s)\,dy\,ds
\ge\delta-(\delta/2)^{-\nu}t\ge\delta/2.
\end{equation*}
Hence by induction 
\begin{equation}
u_k(x,t)\ge\delta/2\quad\forall (x,t)\in\2{Q_R^T},k\in\Z^+.
\end{equation}
We now divide the proof into two cases.

\noindent $\underline{\text{\bf Case 1}}$: $u_0\in C^{\infty}(\2{B}_R)$
and $f\in C^{\infty}(\1 B_R\times [0,T_1])$ with $u(x,0)=f(x,0)$ on $\1 B_R$.

By (1.4), (1.6), (1.8) and the parabolic Schauder estimates \cite{LSU},
the sequence $\{u_k\}_{k=1}^{\infty}$ are uniformly Holder continuous 
on $\2{Q_R^T}$. Then by the 
parabolic Schauder estimates (\cite{F},\cite{LSU}) the sequence 
$\{u_k\}_{k=1}^{\infty}$ are uniformly bounded in
$C^{2+\beta,1+(\beta/2)}(K)$ for any compact subset 
$K\subset\2{B}_R\times (0,T]$
where $0<\beta<1$ is some constant. By the Ascoli theorem and a 
diagonalization argument $\{u_k\}_{k=1}^{\infty}$ has a subsequence
which we may assume without loss of generality to be the sequence itself
which converges uniformly in $C^{2+\beta,1+(\beta/2)}(K)$ 
to some function $u\in C(\2{Q_R^T})\cap C^{2+\beta,1+(\beta/2)}(K)$ for any
compact subset $K\subset\2{B}_R\times (0,T]$ as $k\to\infty$. Letting
$k\to\infty$ in (1.4), (1.6) and (1.8), we get (1.1) and (1.2). By (1.4) 
$u_k$ satisfies
\begin{equation}
\left\{\begin{aligned}
u_{k,t}=&\Delta u_k-u_{k-1}^{-\nu}\quad\mbox{ in }Q_R^T\\
u_k=&f\qquad\qquad\quad\mbox{ on }\1 B_R\times (0,T)\\
u_k(x,0)=&u_0\qquad\qquad\quad\mbox{ on }B_R
\end{aligned}\right.
\end{equation}
Letting $k\to\infty$ in (1.9), $u$ satisfies (0.10).

\noindent $\underline{\text{\bf Case 2}}$: $u_0\in C(\2{B}_R)$
and $f\in C(\1 B_R\times [0,T_1])$.

Let $T$ be given by (1.7) and let $\4{u}_1\equiv\delta$. For any $k\ge 2$, let
\begin{align}
\4{u}_k(x,t)
=&\delta\int_{B_R}G_R(x,y,t)\,dy
-\delta\int_0^t\int_{\1 B_R}\frac{\1 G_R}{\1 N_y}
(x,y,t-s)\,d\sigma(y)\,ds\nonumber\\
&\qquad -\int_0^t\int_{B_R}G_R(x,y,t-s)\4{u}_{k-1}(y,s)^{-\nu}\,dy\,ds
\nonumber\\
=&\delta -\int_0^t\int_{B_R}G_R(x,y,t-s)\4{u}_{k-1}(y,s)^{-\nu}\,dy\,ds.
\end{align}
Then
\begin{equation}
\4{u}_k(x,t)\le\delta\quad\mbox{ in }Q_R^{T_1}\quad\forall k\in\Z^+.
\end{equation}
By the same argument as before
\begin{equation}
\4{u}_k(x,t)\ge\delta/2\quad\mbox{ in }Q_R^T\quad\forall k\in\Z^+.
\end{equation}
Hence by case 1 $\4{u}_k$ has a subsequence which we may assume without loss
of generality to be the sequence itself that converges uniformly in 
$C^{2+\beta,1+(\beta/2)}(K)$ to some function $\4{u}\in C(\2{Q_R^T})
\cap C^{2+\beta,1+(\beta/2)}(K)$ for any
compact subset $K\subset\2{B}_R\times (0,T]$ as $k\to\infty$. Moreover $\4{u}$
is a solution of (0.10) with $u_0$ and $f$ being replaced by $\delta$ and 
satisfies
\begin{equation}
\delta/2\le\4{u}(x,t)\le\delta\quad\mbox{ in }Q_R^T.
\end{equation}
Now since $u_1\equiv\4{u}_1$ in $Q_R^T$, by (1.4) and (1.10), 
$\4{u}_2\le u_2$ in $Q_R^T$. Suppose 
\begin{equation}
\4{u}_k\le u_k\quad\mbox{ in }Q_R^T.
\end{equation}
for some $k\ge 2$. Then by (1.4), (1.10) and (1.14) we get that (1.14) 
holds with $k$
replaced by $k+1$. Hence by induction (1.14) holds for all $k\in\Z^+$.
Since $\4{u}_k$ converge uniformly to $\4{u}$ on $\2{Q_R^T}$ as $k\to\infty$,
by (1.6) and (1.14) and the parabolic Schauder estimates 
(\cite{F},\cite{LSU}) the sequence 
$\{u_k\}_{k=1}^{\infty}$ are uniformly bounded in
$C^{2+\beta,1+(\beta/2)}(K)$ for any compact subset 
$K\subset B_R\times (0,T]$
where $0<\beta<1$ is some constant. By the Ascoli theorem and a 
diagonalization argument $\{u_k\}_{k=1}^{\infty}$ has a subsequence
which we may assume without loss of generality to be the sequence itself
which converges uniformly in $C^{2+\beta,1+(\beta/2)}(K)$ 
to some function $u\in C(Q_R^T)\cap C^{2+\beta,1+(\beta/2)}(K)$ for any
compact subset $K\subset B_R\times (0,T]$ as $k\to\infty$. 

By (1.4) $u_k$ satisfies (1.9). Letting 
$k\to\infty$ in (1.4) and (1.9) $u$ satisfies (1.2) and (0.11) in $Q_R^T$. 
Then by (1.2) $u\in C(\2{B}_R\times (0,T))$. Since the last two terms of
(1.2) vanish as $t\to 0$ and the first term on the right hand side is the
solution of (1.3) with $f=0$ which converges to $u_0$ in $L^1(B_R)$ as 
$t\to 0$, $u$ converges to $u_0$ in $L^1(B_R)$ as $t\to 0$. Hence $u$ is the
solution of (0.10). Letting $k\to\infty$ in (1.6) and (1.14), by (1.13) we
get (1.1) and the lemma follows.
\end{proof}

\begin{thm}
Let $\nu>0$, $\alpha_2\ge\alpha_1\ge 1/(1+\nu)$
and let 
\begin{equation}
A_2\ge A_1=\left\{\begin{aligned}
&[2\alpha_1(1-\3)(n+2\alpha_1-2)]^{-\frac{1}{1+\nu}}\quad\mbox{ if }
1/(1+\nu)\le\alpha_1\le 1\\
&[2\alpha_1(1-\3)n]^{-\frac{1}{1+\nu}}\qquad\qquad\qquad\mbox{ if }\alpha_1>1
\end{aligned}\right.
\end{equation}
for some constant $0<\3<1$. Suppose $u_0$ satisfies 
(0.3). Let $f\in C(\1 B_R\times [0,\infty))$ be such that
$$
A_1(1+|x|^2+b_1t)^{\alpha_1}\le f\le A_2(1+|x|^2+b_2t)^{\alpha_2}
\quad\mbox{ on }\1 B_R\times [0,\infty)
$$
where 
\begin{equation}
b_1=\left\{\begin{aligned}
&2(n+2\alpha_1-2)\3\quad\mbox{ if }1/(1+\nu)\le\alpha_1\le 1\\
&2\3 n\qquad\qquad\qquad\mbox{ if }\alpha_1>1
\end{aligned}\right.
\end{equation}
and $b_2$ is given by (0.7). Then there exists a unique solution $u$ of 
(0.10) in $Q_R$ which satisfies (1.2) and (0.5) in $Q_R$. 
\end{thm}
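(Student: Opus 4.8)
The plan is to construct two explicit barriers
$\phi_-(x,t)=A_1(1+|x|^2+b_1t)^{\alpha_1}$ and
$\phi_+(x,t)=A_2(1+|x|^2+b_2t)^{\alpha_2}$, show that $\phi_-$ is a subsolution and $\phi_+$ a supersolution of (0.11) in $Q_R$, and then sandwich the solution produced by Lemma 1.2 between them by means of the comparison principle Lemma 1.1. Because $\phi_-\ge A_1>0$ everywhere in $Q_R$, this same lower bound will let me march the local solution of Lemma 1.2 forward in time steps of a fixed length to obtain a global solution on $Q_R$, with uniqueness coming directly from Lemma 1.1. The real content is the barrier verification, where the singular term $-u^{-\nu}$ and the exact values of $A_1,b_1$ intervene.

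For the subsolution, writing $\eta=1+|x|^2+b_1t\ge1$ a direct computation gives
\[
\Delta\phi_--\phi_-^{-\nu}-\partial_t\phi_-
=A_1\alpha_1\eta^{\alpha_1-1}\Big[2n+\tfrac{4(\alpha_1-1)|x|^2}{\eta}-b_1
-\tfrac{A_1^{-1-\nu}}{\alpha_1}\eta^{\,1-\alpha_1(1+\nu)}\Big],
\]
and I must show the bracket is $\ge0$. The hypothesis $\alpha_1\ge1/(1+\nu)$ enters exactly here: it forces the exponent $1-\alpha_1(1+\nu)\le0$, so that $\eta^{\,1-\alpha_1(1+\nu)}\le1$ and the singular contribution is bounded by the constant $A_1^{-1-\nu}/\alpha_1$. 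When $1/(1+\nu)\le\alpha_1\le1$ this constant equals $2(1-\3)(n+2\alpha_1-2)$ by (1.15), and since $\alpha_1-1\le0$ with $|x|^2/\eta<1$ one has $4(\alpha_1-1)|x|^2/\eta\ge4(\alpha_1-1)$; the bracket is then at least $2(n+2\alpha_1-2)-b_1-2(1-\3)(n+2\alpha_1-2)=2\3(n+2\alpha_1-2)-b_1=0$ by the definition (1.16) of $b_1$. The case $\alpha_1>1$ is identical after using $4(\alpha_1-1)|x|^2/\eta\ge0$ and the value $A_1^{-1-\nu}/\alpha_1=2(1-\3)n$. The supersolution check is easier since the singular term has the favorable sign: with $\zeta=1+|x|^2+b_2t$ one finds $\partial_t\phi_+-\Delta\phi_++\phi_+^{-\nu}=A_2\alpha_2\zeta^{\alpha_2-1}\big[b_2-2n-4(\alpha_2-1)|x|^2/\zeta\big]+A_2^{-\nu}\zeta^{-\nu\alpha_2}$, and dropping the last (nonnegative) term it suffices that $b_2\ge2n+4(\alpha_2-1)|x|^2/\zeta$, which holds with $b_2=2n$ when $\alpha_2\le1$ (the gradient term is $\le0$) and with $b_2=2(n+2\alpha_2-2)$ when $\alpha_2>1$ (using $|x|^2/\zeta<1$), matching (0.7).

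With the barriers established I would finish as follows. By hypothesis $\phi_-(\cdot,0)=A_1(1+|x|^2)^{\alpha_1}\le u_0\le A_2(1+|x|^2)^{\alpha_2}=\phi_+(\cdot,0)$ and $\phi_-\le f\le\phi_+$ on $\partial B_R\times[0,\infty)$. Lemma 1.2 furnishes a solution $u$ of (0.10) on some $Q_R^T$ satisfying (1.2) and bounded below by a positive constant; comparing $u$ against $\phi_\pm$ via Lemma 1.1 (using $u$ as a subsolution for the upper bound and as a supersolution for the lower) yields $\phi_-\le u\le\phi_+$, that is (0.5), on $Q_R^T$. The decisive point for globality is that this forces $u(\cdot,T)\ge\phi_-(\cdot,T)\ge A_1$, and since $f\ge\phi_-\ge A_1$ on the boundary, a restart at time $T$ has $\delta\ge A_1$; as $f$ is defined for all time the existence time $\min(T_1/2,(\delta/2)^{1+\nu})$ of Lemma 1.2 is bounded below by the fixed quantity $(A_1/2)^{1+\nu}$. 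Iterating the construction and the comparison over successive intervals of this uniform length extends $u$ to all of $Q_R$ while preserving (1.2) and (0.5), and Lemma 1.1 gives uniqueness. The main obstacle is thus the algebra of the subsolution inequality, where the condition $\alpha_1\ge1/(1+\nu)$ together with the precise choice of $A_1$ and $b_1$ conspire to make the bracket vanish in the worst case.
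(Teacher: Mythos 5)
Your proposal is correct and follows the paper's proof in all essentials: the same two barriers $\psi_i=A_i(1+|x|^2+b_it)^{\alpha_i}$, the same algebraic verification (your bracket computation is exactly the paper's (1.17)--(1.19) reorganized, with $\eta^{1-\alpha_1(1+\nu)}\le 1$ playing the role of the paper's $(1+|x|^2+b_1t)^{\alpha_1(1+\nu)-1}\ge 1$, and the constants $A_1,b_1$ making the worst case vanish identically), and the same use of Lemma 1.2 for local existence and Lemma 1.1 for the sandwich and uniqueness. The one genuine difference is the continuation to all of $Q_R$: the paper argues by contradiction at the maximal existence time $T_1$, using the comparison to get (0.5) up to $T_1$, parabolic Schauder estimates to extend $u$ continuously to $\2{B}_R\times[T_1/2,T_1]$, and a restart via Lemma 1.2 to contradict maximality; you instead march forward directly, observing that the lower barrier forces $\delta\ge A_1$ at every restart so Lemma 1.2 always yields a step of the fixed length $(A_1/2)^{1+\nu}$. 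Your version is more constructive and makes the uniform step length explicit, but it quietly assumes what the paper's Schauder step supplies, namely that $u(\cdot,T)$ extends to a function in $C(\2{B}_R)$ so that Lemma 1.2 is applicable at the restart time --- you should invoke interior parabolic estimates together with the boundary continuity coming from (1.2) (or (0.5)) to justify this. Similarly, your assertion that iteration ``preserves (1.2)'' needs the gluing argument the paper carries out at the end: defining $v$ as the right-hand side of (1.2) for the global solution and showing $w=u-v$ solves the heat equation with zero data, whence $w\equiv 0$ by the maximum principle. Neither omission is a wrong turn; both are standard steps the paper executes explicitly, and with them inserted your argument is a complete proof of the theorem.
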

\begin{proof}
Let $\psi_i=A_i(1+|x|^2+b_it)^{\alpha_i}$ for $i=1,2$. By direct 
computation, $\forall i=1,2$,
\begin{equation}
\Delta\psi_i=2\alpha_iA_i\biggl\{n+2\alpha_i-2+2(1-\alpha_i)
\frac{1+b_it}{1+|x|^2+b_it}\biggr\}(1+|x|^2+b_it)^{\alpha_i-1}
\end{equation}
Since
\begin{equation*}
2(1-\alpha_1)\frac{1+b_1t}{1+|x|^2+b_1t}\ge\left\{\begin{aligned}
&0\qquad\qquad\quad\mbox{if }1/(1+\nu)\le\alpha_1\le 1\\
&2(1-\alpha_1)\quad\,\mbox{ if }\alpha_1>1,
\end{aligned}\right.
\end{equation*}
by (1.17),
\begin{equation*}
\Delta\psi_1\ge\left\{\begin{aligned}
&2\alpha_1A_1(n+2\alpha_1-2)(1+|x|^2+b_1t)^{\alpha_1-1}
\quad\mbox{if }1/(1+\nu)\le\alpha_1\le 1\\
&2\alpha_1nA_1(1+|x|^2+b_1t)^{\alpha_1-1}\qquad\qquad
\qquad\mbox{ if }\alpha_1>1.
\end{aligned}\right.
\end{equation*}
Hence for $1/(1+\nu)\le\alpha_1\le 1$,
\begin{align}
&\Delta\psi_1-\psi_1^{-\nu}-\psi_{1,t}\nonumber\\
\ge&2\alpha_1A_1(n+2\alpha_1-2)(1+|x|^2+b_1t)^{\alpha_1-1}
-A_1^{-\nu}(1+|x|^2+b_1t)^{-\alpha_1\nu}\nonumber\\
&\qquad-\alpha_1b_1A_1(1+|x|^2+b_1t)^{\alpha_1-1}\nonumber\\
\ge&A_1^{-\nu}(1+|x|^2+b_1t)^{-\alpha_1\nu}\{\alpha_1(2(n+2\alpha_1-2)-b_1)
A_1^{1+\nu}(1+|x|^2+b_1t)^{\alpha_1(1+\nu)-1}-1\}\nonumber\\
\ge&A_1^{-\nu}(1+|x|^2+b_1t)^{-\alpha_1\nu}\{\alpha_1(2(n+2\alpha_1-2)-b_1)
A_1^{1+\nu}-1\}
\end{align}
and for $\alpha_1>1$,
\begin{align}
&\Delta\psi_1-\psi_1^{-\nu}-\psi_{1,t}\nonumber\\
\ge&2\alpha_1nA_1(1+|x|^2+b_1t)^{\alpha_1-1}
-A_1^{-\nu}(1+|x|^2+b_1t)^{-\alpha_1\nu}\nonumber\\
&\qquad-\alpha_1b_1A_1(1+|x|^2+b_1t)^{\alpha_1-1}\nonumber\\
\ge&A_1^{-\nu}(1+|x|^2+b_1t)^{-\alpha_1\nu}\{\alpha_1(2n-b_1)
A_1^{1+\nu}(1+|x|^2+b_1t)^{\alpha_1(1+\nu)-1}-1\}\nonumber\\
\ge&A_1^{-\nu}(1+|x|^2+b_1t)^{-\alpha_1\nu}\{\alpha_1(2n-b_1)
A_1^{1+\nu}-1\}.
\end{align}
By (1.15) and (1.16) the right hand side of (1.18) and (1.19) is $\ge 0$.
Hence
\begin{equation*}
\Delta\psi_1-\psi_1^{-\nu}\ge\psi_{1,t}\quad\mbox{ in }Q_R.
\end{equation*}
Since
\begin{equation*}
2(1-\alpha_2)\frac{1+b_2t}{1+|x|^2+b_2t}\le\left\{\begin{aligned}
&2(1-\alpha_2)\quad\mbox{if }1/(1+\nu)\le\alpha_2\le 1\\
&0\qquad\qquad\,\mbox{ if }\alpha_2>1,
\end{aligned}\right.
\end{equation*}
by (1.17),
\begin{equation*}
\Delta\psi_2\le\left\{\begin{aligned}
&2\alpha_2nA_2(1+|x|^2+b_2t)^{\alpha_2-1}\qquad\qquad\qquad
\,\,\mbox{ if }1/(1+\nu)\le\alpha_2\le 1\\
&2\alpha_2A_2(n+2\alpha_1-2)(1+|x|^2+b_2t)^{\alpha_2-1}
\quad\mbox{ if }\alpha_2>1.
\end{aligned}\right.
\end{equation*}
By (0.7) for $1/(1+\nu)\le\alpha_2\le 1$,
\begin{align}
&\Delta\psi_2-\psi_2^{-\nu}-\psi_{2,t}\nonumber\\
\le&2\alpha_2nA_2(1+|x|^2+b_2t)^{\alpha_2-1}
-A_2^{-\nu}(1+|x|^2+b_2t)^{-\alpha_2\nu}\nonumber\\
&\qquad-\alpha_2b_2A_2(1+|x|^2+b_2t)^{\alpha_2-1}\nonumber\\
\le&A_2^{-\nu}(1+|x|^2+b_2t)^{-\alpha_2\nu}\{\alpha_2(2n-b_2)
A_2^{1+\nu}(1+|x|^2+b_2t)^{\alpha_2(1+\nu)-1}-1\}\nonumber\\
\le&0\quad\mbox{ in }Q_R\nonumber
\end{align}
and for $\alpha_1>1$,
\begin{align}
&\Delta\psi_2-\psi_2^{-\nu}-\psi_{2,t}\nonumber\\
\le&2\alpha_2A_2(n+2\alpha_2-2)(1+|x|^2+b_2t)^{\alpha_2-1}
-A_2^{-\nu}(1+|x|^2+b_2t)^{-\alpha_2\nu}\nonumber\\
&\qquad-\alpha_2b_2A_2(1+|x|^2+b_2t)^{\alpha_2-1}\nonumber\\
\le&A_2^{-\nu}(1+|x|^2+b_2t)^{-\alpha_2\nu}\{\alpha_2(2(n+2\alpha_2-2)-b_2)
A_2^{1+\nu}(1+|x|^2+b_2t)^{\alpha_2(1+\nu)-1}-1\}\nonumber\\
\le&0\quad\mbox{ in }Q_R\nonumber.
\end{align}
By Lemma 1.2 there exists a constant $T>0$ such that there exists a 
unique solution $u$ of (0.10) in $Q_R^T$ 
which satisfies (1.1) and (1.2) in $Q_R^T$ with $\delta=A_1$. 
Let $T_1\ge T$ be the maximal existence time of
a unique solution of $u$ of (0.10) in $Q_R^{T_1}$. Suppose $T_1<\infty$. 
For any $0<\delta<T_1/2$ since $u\in C(\2{B}_R\times [T_1/2,T_1-\delta])$ is
a classical solution of (0.11), $\min_{\2{B}_R\times [T_1/2,T_1-\delta]}u>0$.
Hence by (1.2) $\inf_{\2{Q_R^{T_1-\delta}}}u>0$. 
Since $\psi_1$ and $\psi_2$ are subsolution and supersolution of (0.10), 
by Lemma 1.1 (0.5) holds in 
$Q_R^{T_1-\delta}$ for any $0<\delta<T_1/2$. Hence (0.5) holds in 
$Q_R^{T_1}$. 

Then by (0.5) and the parabolic Schauder estimates \cite{LSU}
$u$ can be extended to a continuous function on $\2{B}_R\times [T_1/2,T_1]$.
Then by Lemma 1.2 there exists a constant $T_3>0$ such that there exists
a solution $\4{u}(x,t)$ of (0.10) in $Q_R^{T_3}$ with $u_0=u(x,T_1)$
and $f$ being replaced by $f(x,T_1+t)$. We extend $u$
to a function on $\2{B}_R\times (0,T_1+T_3)$ by setting $u(x,t)=\4{u}
(x,T_1+t)$ for any $|x|\le R$, $T_1\le t<T_1+T_3$. Then $u$ is a solution
of (0.10) in $Q_R^{T_1+T_3}$. This contradicts the maximality of $T_1$. Hence
$T_1=\infty$. By the previous argument $u$ satisfies (0.5) in $Q_R$.
Let $v$ be given by the right hand side of (1.2). Then $v$ satisfies
\begin{equation*}
\left\{\begin{aligned}
v_t=&\Delta v-u^{-\nu}\quad\mbox{ in }Q_R\\
v=&f\qquad\qquad\,\,\mbox{ on }\1 B_R\times (0,\infty)\\
v(x,0)=&u_0\qquad\qquad\,\mbox{ in }B_R.
\end{aligned}\right.
\end{equation*}
Hence the function $w=u-v$ satisfies
\begin{equation*}
\left\{\begin{aligned}
w_t=&\Delta w\quad\mbox{ in }Q_R\\
w=&0\qquad\mbox{ on }\1 B_R\times (0,\infty)\\
w(x,0)=&0\qquad\mbox{ in }B_R.
\end{aligned}\right.
\end{equation*}
By the maximum principle $w\equiv 0$ in $Q_R$. Hence $u=v$ in $Q_R$. 
Thus $u$ satisfies (1.2) and the theorem follows.
\end{proof}

We next recall a comparison result of \cite{W}.

\begin{lem}(Lemma 1.3 of \cite{W})
Suppose $\2{u}$ and $\5{u}$ are supersolution and subsolution of
$$\left\{\begin{aligned}
&u_t=\Delta u+f(u)\quad\mbox{ in }\R^n\times (0,T)\\
&u(x,0)=u_0\qquad\quad\mbox{in }\R^n
\end{aligned}\right.
$$
for some function $u_0\in C(\R^n)$ and $f\in C(\R)$ such that 
$\2{u}-\5{u}\ge -Be^{\beta |x|^2}$
in $\R^n\times (0,T)$ for some constants $B>0$ and $\beta>0$. Suppose
$f(\2{u})-f(\5{u})\ge g(x,t)(\2{u}-\5{u})$ for some function 
$g\in C_{loc}^{\alpha,\alpha/2}(\R^n\times (0,T))$, $0<\alpha<1$, and
$g(x,t)\le C(1+|x|^2)$ on $\R^n\times (0,T)$ for some constant $C>0$.
Then $\2{u}\ge\5{u}$ on $\R^n\times (0,T)$.
\end{lem}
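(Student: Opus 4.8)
The plan is to reduce the assertion to a Phragmén--Lindel\"of type maximum principle for a linear differential inequality and then to dispatch that principle by an explicit barrier argument. Set $w=\2{u}-\5{u}$. Subtracting the supersolution inequality $\2{u}_t\ge\Delta\2{u}+f(\2{u})$ from the subsolution inequality $\5{u}_t\le\Delta\5{u}+f(\5{u})$ and using the hypothesis $f(\2{u})-f(\5{u})\ge g(x,t)w$, I obtain
\[
w_t\ge\Delta w+g(x,t)\,w\quad\mbox{ in }\R^n\times(0,T),
\]
together with $w(\cdot,0)\ge 0$ (since $\2{u}(\cdot,0)\ge u_0\ge\5{u}(\cdot,0)$) and the one-sided growth bound $w\ge -Be^{\beta|x|^2}$. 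Thus it suffices to show that any such $w$ is nonnegative, i.e. to prove a comparison principle for the operator $Lv:=v_t-\Delta v-g(x,t)v$ in the growth class $e^{\beta|x|^2}$, where the potential $g$ is allowed to grow like $C(1+|x|^2)$.

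The central device is a barrier that grows strictly faster than $e^{\beta|x|^2}$ while absorbing the quadratic potential. I would take $H(x,t)=\exp\{a(t)|x|^2+b(t)\}$ and compute $H_t-\Delta H=\{(a'-4a^2)|x|^2+b'-2na\}H$. Choosing $a$ to solve $a'=4a^2+C+1$ with $a(0)=a_0>\beta$, and $b$ to solve $b'=2na+C+1$, and using $g\le C(1+|x|^2)$, gives
\[
LH=H_t-\Delta H-gH\ge (1+|x|^2)\,H> 0 .
\]
The Riccati ODE for $a$ blows up only in finite time, so on a slab $\R^n\times[0,\tau]$ with $\tau$ depending only on $C$ and $a_0$ the function $a(t)$ stays finite and $\ge a_0>\beta$; hence $b$ is bounded there and $H$ is a strict supersolution of $L$ on that slab that dominates $e^{\beta|x|^2}$. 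Note $\tau$ depends only on $C$ and the fixed choice $a_0>\beta$, not on the data of $w$.

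Finally I would run a first--touching argument. Fix $\3>0$ and set $v=w+\3 H$; then $Lv=Lw+\3\,LH\ge \3(1+|x|^2)H>0$, while $v(\cdot,0)\ge\3 H(\cdot,0)>0$ and, because $a(t)>\beta$, $v\ge -Be^{\beta|x|^2}+\3 e^{a(t)|x|^2+b(t)}\to+\infty$ as $|x|\to\infty$ uniformly on $[0,\tau]$. Consequently the spatial infimum of $v$ is attained at each time. If $v$ were not positive throughout the slab, let $t_*\le\tau$ be the first time at which $\inf_x v(\cdot,t_*)=0$, attained at some $x_0$; there $\nabla v=0$, $\Delta v\ge 0$, and $v_t(x_0,t_*)\le 0$ (since $v>0$ for $t<t_*$), whence $Lv(x_0,t_*)\le 0$, contradicting $Lv>0$. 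Therefore $v>0$ on $\R^n\times[0,\tau]$; letting $\3\to 0$ yields $w\ge 0$ on $\R^n\times[0,\tau]$. Since $\tau$ is independent of the step, I iterate on $[\tau,2\tau],[2\tau,3\tau],\dots$ — after the first step $w\ge 0\ge -Be^{\beta|x|^2}$, so the growth hypothesis is automatically met at each restart — covering $(0,T)$ in finitely many steps and giving $\2{u}\ge\5{u}$ on $\R^n\times(0,T)$. I expect the main obstacle to be the construction in the second step: producing a supersolution that simultaneously outgrows $e^{\beta|x|^2}$ and swallows the quadratically growing potential $g$, which is only possible on a short time interval and thus forces the iteration; the sign of $g$, which blocks a naive maximum principle, is precisely what the first--touching argument (zero value at the contact point) neutralizes.
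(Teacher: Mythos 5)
The paper gives no proof of this lemma at all: it is imported verbatim as Lemma 1.3 of \cite{W} (X.~Wang, Trans.\ Amer.\ Math.\ Soc.\ 337 (1993)), so there is nothing in the paper itself to compare against. Your proposal supplies a correct, self-contained proof, and it is the standard Phragm\'en--Lindel\"of argument behind results of this type. The computation for $H=e^{a(t)|x|^2+b(t)}$ is right: $H_t-\Delta H=\{(a'-4a^2)|x|^2+b'-2na\}H$, and with $a'=4a^2+C+1$, $b'=2na+C+1$ and $g\le C(1+|x|^2)$ one indeed gets $LH\ge (1+|x|^2)H>0$ on a slab $[0,\tau]$ determined solely by the Riccati blow-up time, which depends only on $C$ and $a_0>\beta$; the first-touching argument is sound precisely because $v=0$ at the contact point kills the term $gv$, so neither a lower bound on $g$ nor its sign matters; and since $\tau$ is data-independent, the iteration covers $(0,T)$ in finitely many steps, the growth hypothesis holding on the whole cylinder by assumption. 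Two remarks. First, your argument never uses the H\"older regularity $g\in C_{loc}^{\alpha,\alpha/2}$ assumed in the statement -- only the pointwise upper bound -- so your route is in fact slightly more elementary than what the hypotheses permit (Wang's own proof transforms $w/H$ and invokes a maximum principle, which is the same circle of ideas in a different dress). Second, the one point to handle with care is $t=0$: you take $w(\cdot,0)\ge 0$ pointwise, which is the natural reading of the lemma as stated, but under this paper's definition of super/subsolutions of (0.1) the initial data is attained only in $L_{loc}^1(\R^n)$, so to apply the lemma as the paper does (e.g.\ in Theorems 1.5, 2.1--2.3) one should either run your argument on $[\delta,\tau]$ and let $\delta\to 0$, or note that the comparison is applied where both functions are continuous up to $t=0$. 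This is a minor technical caveat, not a gap in the argument as you framed it.
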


\begin{thm}
Let $\nu>0$, $1/(1+\nu)\le\alpha_1\le\alpha_2$, $\alpha_1<1$ and 
$A_2\ge A_1$ where $A_1$ is given by (0.4) for some constant 
$0<\3<1$. Suppose $u_0$ satisfies (0.3). Then there exists a unique 
solution $u$ of (0.1) which satisfies 
\begin{equation}
u(x,t)=\int_{\R^n}\frac{1}{(4\pi t)^{\frac{n}{2}}}e^{-\frac{|x-y|^2}{4t}}
u_0(y)\,dy-\int_0^t\int_{\R^n}\frac{1}{(4\pi (t-s))^{\frac{n}{2}}}
e^{-\frac{|x-y|^2}{4(t-s)}}u(y,s)^{-\nu}\,dy\,ds
\end{equation}
and (0.5) in $\R^n\times (0,\infty)$ where $b_1$ and $b_2$ are given by 
(0.6) and (0.7).
\end{thm}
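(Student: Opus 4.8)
The plan is to obtain the global solution as the increasing limit of the bounded-domain solutions produced by Theorem 1.3, exhausting $\R^n$ by the balls $B_R$. For each $R>0$ I would take as boundary datum the lower barrier itself, $f_R(x,t)=A_1(1+|x|^2+b_1t)^{\alpha_1}$ on $\1 B_R\times[0,\infty)$. Since $A_2\ge A_1$, $\alpha_2\ge\alpha_1$ and $b_2\ge b_1$, this $f_R$ lies between the two bounds required by Theorem 1.3 (on $\1 B_R$ it equals the lower barrier and is dominated by $A_2(1+R^2+b_2t)^{\alpha_2}$), so that theorem gives a unique solution $u_R$ of (0.10) on $Q_R$ satisfying the representation (1.2) and the two-sided bound (0.5). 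The decisive point is that the constants $A_1,A_2,b_1,b_2,\alpha_1,\alpha_2$ in (0.5) do not depend on $R$.

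First I would establish monotonicity in $R$. If $R_1<R_2$, then on $\1 B_{R_1}$ the lower bound of (0.5) for $u_{R_2}$ gives $u_{R_2}\ge A_1(1+R_1^2+b_1t)^{\alpha_1}=f_{R_1}$, while $u_{R_1}$ and $u_{R_2}$ share the initial datum $u_0$ and both exceed $A_1>0$; hence Lemma 1.1 yields $u_{R_1}\le u_{R_2}$ on $Q_{R_1}$. Thus $u_R$ increases with $R$ and $u_R\uparrow u$ pointwise, with $u\le A_2(1+|x|^2+b_2t)^{\alpha_2}$. On every compact $K\subset\R^n\times(0,\infty)$ the bounds (0.5) trap $u_R$ (for large $R$) between two positive constants, so $u_R^{-\nu}$ is uniformly bounded there; parabolic Schauder theory then bounds $\{u_R\}$ uniformly in $C^{2+\beta,1+(\beta/2)}(K)$, and the monotone limit $u$ is a classical solution of $u_t=\Delta u-u^{-\nu}$ satisfying (0.5). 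In particular (0.5) forces the growth conditions $C_1(1+|x|^2)^{-1/(1+\nu)}\le u\le C_2e^{C_2|x|^2}$ in the definition of a solution of (0.1).

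Next I would pass to the limit $R\to\infty$ in (1.2). Since $G_R\uparrow(4\pi t)^{-n/2}e^{-|x-y|^2/4t}$, monotone convergence handles $\int_{B_R}G_Ru_0$, and dominated convergence handles the nonlinear term via the uniform, Gaussian-integrable bound $u_R^{-\nu}\le A_1^{-\nu}(1+|y|^2)^{-\alpha_1\nu}$; these produce the two terms on the right of (1.20). The main obstacle is to show the boundary term $-\int_0^t\int_{\1 B_R}(\1 G_R/\1 N_y)f_R\,d\sigma\,ds$ tends to $0$. I would control it with the standard parabolic gradient estimate $|\nabla_yG_R(x,y,\tau)|\le C\tau^{-(n+1)/2}e^{-c|x-y|^2/\tau}$: on $\1 B_R$ one has $|x-y|\ge R-|x|$, and since $f_R$ and $|\1 B_R|$ grow only polynomially in $R$ while the Gaussian factor decays like $e^{-c(R-|x|)^2/t}$, the whole expression is $O(R^{m}e^{-c(R-|x|)^2/t})\to 0$ for each fixed $(x,t)$. (Equivalently, this term is the caloric function with datum $f_R$ and zero initial value, bounded by $\max f_R$ times the exit probability $\mathbb P_x(\tau_{B_R}\le t)$, which decays faster than any power of $R$.) This yields (1.20), and then $u(\cdot,t)\to u_0$ in $L^1_{loc}$ as $t\to0$ follows from (1.20), since the heat-semigroup term converges to $u_0$ and the nonlinear term is locally $O(t)$.

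Finally, uniqueness follows from Lemma 1.3 with $f(u)=-u^{-\nu}$. Given two solutions $u_1,u_2$ in the admissible class, set $g=(f(u_1)-f(u_2))/(u_1-u_2)=\nu\,\xi^{-1-\nu}$ by the mean value theorem, with $\xi$ between $u_1$ and $u_2$; where $u_1=u_2$ put $g=\nu u_1^{-1-\nu}$, so $g$ inherits local Hölder continuity from the solutions. The lower bound $C_1(1+|x|^2)^{-1/(1+\nu)}$ built into the definition of a solution gives exactly $\xi^{-1-\nu}\le C_1^{-1-\nu}(1+|x|^2)$, hence $g\le C(1+|x|^2)$ as Lemma 1.3 demands; the polynomial upper bound gives $u_1-u_2\ge -Be^{\beta|x|^2}$, and $f(u_1)-f(u_2)=g(u_1-u_2)$ holds with equality. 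Applying Lemma 1.3 once with $\bar u=u_1,\underline u=u_2$ and once with the roles reversed gives $u_1\le u_2$ and $u_2\le u_1$, hence $u_1=u_2$. I expect the boundary-term decay to be the only genuinely delicate step; the remainder is comparison together with standard parabolic compactness.
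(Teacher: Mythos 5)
Your proposal is correct and follows the same overall architecture as the paper's proof of Theorem 1.5: solve (0.10) on balls via Theorem 1.3 with boundary datum $f=A_1(1+|x|^2+b_1t)^{\alpha_1}$, use the $R$-independent bound (0.5) together with parabolic Schauder estimates and Ascoli to extract a classical limit satisfying (0.11) and (0.5), pass to the limit in the representation formula (1.2)/(1.21) to obtain (1.20), recover the initial data from (1.20), and prove uniqueness by the mean value theorem combined with Wang's comparison result --- which is Lemma 1.4 in the paper; your ``Lemma 1.3'' is its label in \cite{W}, a harmless mislabel. You differ in two local but genuine ways. First, you add monotonicity of $u_R$ in $R$ (via Lemma 1.1, using that the lower bound in (0.5) for $u_{R_2}$ dominates the boundary datum of $u_{R_1}$ on $\partial B_{R_1}$), so the full family converges rather than a diagonal subsequence; the paper simply extracts a subsequence, and both suffice. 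Second, and more substantively, for the vanishing of the boundary term $-\int_0^t\int_{\partial B_R}(\partial G_R/\partial N_y)f_R\,d\sigma\,ds$ the paper invokes the quantitative Green's-function gradient estimate (1.22) from Lemma 1.3 of \cite{H1}, whose point is precisely uniformity in $R$, and then estimates the surface integral directly as in (1.23); your primary ``standard parabolic gradient estimate'' would itself need an $R$-uniform constant (which is what \cite{H1} supplies), but your parenthetical maximum-principle argument closes this cleanly and is self-contained: the boundary term is the caloric function in $Q_R$ with zero initial data and boundary datum $f_R$, hence bounded by $\sup f_R$ times the solution with boundary datum $1$ (the exit probability), which decays like $e^{-c(R-|x|)^2/t}$ and beats the polynomial growth of $f_R$ in $R$. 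Finally, your uniqueness step is marginally stronger than the paper's: the paper compares two solutions both assumed to satisfy (0.5), whereas you use only the bounds $C_1(1+|x|^2)^{-1/(1+\nu)}\le u\le C_2e^{C_2|x|^2}$ built into the definition of a solution of (0.1) (applied on $\R^n\times(0,T-\delta)$ for each $\delta>0$), which still yields $g\le C(1+|x|^2)$ and the Gaussian lower bound on the difference required by Lemma 1.4; your integral-form definition of $g$ where the solutions coincide also patches the H\"older-continuity point that the paper passes over silently.
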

\begin{proof}
By Theorem 1.3 for any $k\in\Z^+$ there exists a unique solution $u_k$
of (0.10) with $f=A_1(1+|x|^2+b_1t)^{\alpha_1}$ in $Q_k$ which satisfies
(0.5) and 
\begin{align}
u_k(x,t)
=&\int_{B_R}G_k(x,y,t)u_0(y)\,dy-\int_0^t\int_{B_k}G_k(x,y,t-s)
u_k(y,s)^{-\nu}\,dy\,ds\nonumber\\
&\qquad-A_1\int_0^t\int_{\1 B_k}\frac{\1 G_k}{\1 N_y}
(x,y,t-s)(1+|y|^2+b_1s)^{\alpha_1}\,d\sigma(y)\,ds.
\end{align}
in $Q_k$. By (0.5) and the parabolic Schauder
estimates \cite{LSU} the sequence $\{u_k\}_{k=1}^{\infty}$ is uniformly
bounded in $C^{2+\beta,1+(\beta/2)}(K)$ for any compact subset 
$K\subset\R^n\times (0,\infty)$ where $0<\beta<1$ is some constant. 
By the Ascoli theorem and a diagonalization argument 
$\{u_k\}_{k=1}^{\infty}$ has a subsequence which we may assume without 
loss of generality to be the sequence itself which converges uniformly in 
$C^{2+\beta,1+(\beta/2)}(K)$ to some function $u\in C^{2,1}
(\R^n\times (0,\infty))$ as $k\to\infty$. Then $u$ satisfies (0.11) in
$\R^n\times (0,\infty)$. 

Putting $u=u_k$ in (0.5) and letting $k\to\infty$ we get that $u$ 
satisfies (0.5) in $\R^n\times (0,\infty)$. Since $G_k(x,y,t)$ increases
monotonically to $(4\pi t)^{-n/2}e^{-|x-y|^2/4t}$ as $k\to\infty$, the 
first two terms on the right hand side of (1.21) converges to 
$$
\int_{\R^n}\frac{1}{(4\pi t)^{\frac{n}{2}}}e^{-\frac{|x-y|^2}{4t}}
u_0(y)\,dy-\int_0^t\int_{\R^n}\frac{1}{(4\pi (t-s))^{\frac{n}{2}}}
e^{-\frac{|x-y|^2}{4(t-s)}}u(y,s)^{-\nu}\,dy\,ds
$$
as $k\to\infty$. By Lemma 1.3 of \cite{H1} for any $T>0$
there exists constants $C_1>0$, $c_1>0$ such that
\begin{equation}
\biggl|\frac{\1 G_k}{\1 N_y}(x,y,t-s)\biggr |
\le C_1\frac{k-|x|}{(t-s)^{\frac{n+2}{2}}}e^{-c_1\frac{|x-y|^2}{t-s}}\quad
\forall |x|<k, |y|=k, 0<s<t<T,k\ge 1
\end{equation}
Let $k_0>1$. Then by (1.22) for any $|x|\le k_0$, $k\ge 2k_0$, $|y|=k$,
$0<s<t<T$,
\begin{align*}
\biggl|\frac{\1 G_k}{\1 N_y}(x,y,t-s)\biggr |
\le&C_1\frac{k-|x|}{|x-y|^{n+2}}e^{-\frac{c_1|x-y|^2}{2(t-s)}}\biggl[
\biggl(\frac{|x-y|^2}{(t-s)}\biggr)^{\frac{n+2}{2}}
e^{-\frac{c_1|x-y|^2}{2(t-s)}}\biggr]\\
\le&C_2\frac{k-|x|}{|x-y|^{n+2}}e^{-\frac{c_1|x-y|^2}{2(t-s)}}\\
\le&C_2\frac{k-|x|}{(k-|x|)^{n+2}}
\end{align*}
for some constant $C_2>0$.
Hence
\begin{align}
\biggl|\int_0^t\int_{|y|=k}\frac{\1 G_k}{\1 N_y}(x,y,t-s)
(1+|y|^2+b_1s)^{\alpha_1}\,d\sigma (y)\,ds\biggr|
\le&C\frac{(k-|x|)k^{n-1}}{(k-|x|)^{n+2}}(1+k^2)^{\alpha_1}\nonumber\\
\to&0\quad\mbox{ as }k\to\infty
\end{align}
for any $|x|\le k_0, 0<t<T$. Since $k_0>1$ and $T>0$ are arbitrary,
(1.23) holds for any $x\in\R^n$ and $t>0$. Hence letting $k\to\infty$ in 
(1.21) we get (1.20).

By (0.5) the second term on the right hand side of (1.20) converges 
uniformly to $0$ as $t\to 0$. Since the first term on the right hand 
side of (1.20) is the solution of the heat equation in $\R^n\times 
(0,\infty)$, this term converges in $L_{loc}^1(\R^n)$ to $u_0$ as $t\to 0$. 
Hence by (1.20) $u(\cdot,t)$ converges in $L_{loc}^1(\R^n)$ to $u_0$ as 
$t\to 0$. Thus $u$ is a solution of (0.1). Suppose there exists another 
solution $v$ of (0.1) which satisfies (0.5) and (1.20) in $\R^n\times 
(0,\infty)$. By the mean value theorem,
$$
-u^{-\nu}+v^{-\nu}=\nu\xi^{-(1+\nu)}(u-v)
$$
for some $\xi=\xi(x,t)$ between $u(x,t)$ and $v(x,t)$. Then by (0.5),
$$
\xi^{-(1+\nu)}\le C(1+|x|^2+b_1t)^{-\alpha_1(1+\nu)}\le C\quad\forall
x\in\R^n, t>0,
$$
for some constant $C>0$. Henc by Lemma 1.4 $u=v$ on $\R^n\times (0,\infty)$.
Thus the solution $u$ of (0.1) is unique.
\end{proof}

By Lemma 1.2, Lemma 1.4 and an argument similar to the proof of 
Theorem 1.5 we obtain the following extension of the local existence 
theorem of (0.1) (cf. Theorem 3.3 of \cite{GW1}) proved in \cite{GW1}.

\begin{thm}
Let $\nu>0$. Suppose $u_0\in C(\R^n)$ satisfies $\delta=\inf_{\R^n}u_0>0$
and there exists constants $C_1>0$ and $C_2>0$ such that 
$$
u_0(x)\le C_1e^{C_2|x|^2}\quad\mbox{ in }\R^n.
$$ 
Then there exists a constant $T>0$ such that
there exists a unique solution $u$ of (0.1) in $\R^n\times (0,t)$
which satisfies (1.20) in $\R^n\times (0,\infty)$. If $u_0\le C_3
(1+|x|^2)^{\alpha}$ in $\R^n$ for some constants $C_3>0$ and 
$\alpha\ge 1/(1+\nu)$,
then $T\ge (\delta/2)^{-\nu-1}$ such that when $T<\infty$ we have
$\lim_{t\nearrow T}\inf_{\R^n}u(\cdot,t)=0$.
\end{thm}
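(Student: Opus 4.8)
The plan is to follow the proof of Theorem 1.5 almost verbatim, the only change being that the explicit sub/supersolution bounds are replaced by the two-sided bound supplied directly by Lemma 1.2, after which the existence time and the extinction characterization can be read off. First I would set up the bounded-domain approximations. For each $k\in\Z^+$ I apply Lemma 1.2 to (0.10) on $Q_k$ with initial datum $u_0|_{\2 B_k}$ and time-independent boundary datum $f_k:=u_0|_{\1 B_k}$. Since $\inf_{\R^n}u_0=\delta>0$, we have $\delta_k:=\min(\min_{\2 B_k}u_0,\inf_{\1 B_k}u_0)=\min_{\2 B_k}u_0\ge\delta$, and because $f_k$ does not depend on $t$ the parameter $T_1$ in Lemma 1.2 may be taken arbitrarily large. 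Lemma 1.2 then produces a unique solution $u_k$ on $Q_k^{T_k}$ with lifespan $T_k\ge(\delta/2)^{1+\nu}$ satisfying $\delta/2\le u_k\le w_k$, where $w_k$ solves the heat equation (1.3) with the same data.

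Next I would manufacture a uniform-in-$k$ upper bound. Let $W$ solve the heat equation on $\R^n\times(0,\infty)$ with initial datum $C_1e^{C_2|x|^2}\ge u_0$; the Gaussian convolution is finite precisely on $\R^n\times[0,1/(4C_2))$. Since $C_1e^{C_2|x|^2}$ is convex, Jensen's inequality for the heat semigroup gives $W(\cdot,t)\ge C_1e^{C_2|\cdot|^2}\ge u_0$, so $W$ dominates the parabolic boundary data of $w_k$ and the maximum principle yields $w_k\le W$ on $Q_k$, whence $u_k\le W$. Thus on the common interval $[0,T)$ with $T:=\min((\delta/2)^{1+\nu},1/(4C_2))$ we have the uniform two-sided bound $\delta/2\le u_k\le W$. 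The parabolic Schauder estimates then bound $\{u_k\}$ in $C^{2+\beta,1+(\beta/2)}(K)$ on every compact $K\subset\R^n\times(0,T)$, and Ascoli together with a diagonalization argument extracts a subsequential $C^{2,1}_{loc}$-limit $u$ solving (0.11) classically, with $\delta/2\le u\le W$ preserved.

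To upgrade $u$ to a solution of (0.1) I pass to the limit in the integral identity (1.2) exactly as in Theorem 1.5: the kernels $G_k$ increase monotonically to the Gaussian, the volume terms converge, and the boundary term is killed by means of the normal-derivative estimate (1.22) (this is the point where the growth hypothesis on $u_0$, hence on $u\le W$, is used to force decay of the boundary integral), giving (1.20); the $L^1_{loc}$ attainment of $u_0$ as $t\to0$ then follows as before. Uniqueness is identical to Theorem 1.5: writing $-u^{-\nu}+v^{-\nu}=\nu\xi^{-(1+\nu)}(u-v)$ by the mean value theorem, the lower bound $u,v\ge\delta/2$ makes $\xi^{-(1+\nu)}$ bounded, so Lemma 1.4 forces $u=v$ on the interval of common existence.

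Finally, under the polynomial bound $u_0\le C_3(1+|x|^2)^{\alpha}$ the comparison function $W$ exists for all $t>0$, so the constraint $1/(4C_2)$ disappears and the lifespan is at least the value $(\delta/2)^{1+\nu}$ furnished by Lemma 1.2. Letting $T$ denote the maximal existence time, the extinction claim follows from a restart argument: if $\lim_{t\nearrow T}\inf_{\R^n}u(\cdot,t)=0$ fails, then $\limsup_{t\nearrow T}\inf_{\R^n}u(\cdot,t)=:m>0$, and choosing $t_0<T$ with $T-t_0<(m/4)^{1+\nu}$ and $\inf_{\R^n}u(\cdot,t_0)\ge m/2$, one reapplies Lemma 1.2 with initial datum $u(\cdot,t_0)$ (whose polynomial upper bound, and hence $W$-type control, is propagated) to continue the solution for an additional time $\ge(m/4)^{1+\nu}>T-t_0$, pushing existence past $T$ and contradicting maximality. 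Hence $\inf_{\R^n}u(\cdot,t)\to0$ as $t\nearrow T$. The two steps I expect to be delicate are precisely these last two: securing the uniform upper bound, which in the bare exponential case restricts the lifespan to the interval where the heat comparison $W$ is finite; and the restart/maximality step, where one must verify that the polynomial upper bound survives continuation so that the only mechanism obstructing extension is the spatial infimum collapsing to zero.
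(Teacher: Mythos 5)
Your proposal is correct and follows exactly the route the paper intends: the paper gives no separate proof of this theorem, stating only that it follows from Lemma 1.2, Lemma 1.4 and an argument similar to the proof of Theorem 1.5, which is precisely what you carry out (Lemma 1.2 on $Q_k$ with $f_k=u_0|_{\partial B_k}$ and $T_1$ arbitrary, a heat-equation majorant $W$ finite for $t<1/(4C_2)$, Schauder--Ascoli--diagonalization, the kernel estimate (1.22) to kill the boundary term in (1.21), Lemma 1.4 for uniqueness, and a restart argument for the extinction claim, with the stated lower bound $(\delta/2)^{-\nu-1}$ correctly read as a typo for the $(\delta/2)^{\nu+1}$ furnished by (1.7)). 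The one quantitative point to add, which you yourself flagged as delicate, is that with merely exponential data the boundary integral vanishes only if one retains the Gaussian factor in (1.22) rather than the crude bound $C_2(k-|x|)^{-n-1}$ that sufficed for the polynomial data of Theorem 1.5; this forces $T\lesssim c_1/C_2$ in addition to $T\le\min\bigl((\delta/2)^{1+\nu},1/(4C_2)\bigr)$, which is harmless since only some positive $T$ is claimed.
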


\section{Extinction properties of solution}
\setcounter{equation}{0}

In this section we will establish various conditions on the initial 
value $u_0$ for the solutions of (0.1) to extinct in a finite time. 
We will give upper bound estimates for the extinction time and find the
extinction rate of the solutions of (0.1).

\begin{thm}
Let $0<\nu\le 1$,
$$
0<\beta\le\frac{1}{1+\nu}.
$$
Let $u_0\in C(\R^n)$ satisfies
$$
0<u_0\le A_3T^{\frac{1}{1+\nu}}(1+|x|^2)^{-\beta}
$$
for some constant
\begin{equation}
A_3=\left\{\begin{aligned}
&(1+\nu)^{\frac{1}{1+\nu}}\qquad\qquad\qquad\qquad
\mbox{ if }0<\beta\le\min((n-2)/2,1/(1+\nu))\\
&\biggl (\frac{1+\nu}{1+2\beta(1+\nu)(2\beta+2-n)T}
\biggr )^{\frac{1}{1+\nu}}\quad\mbox{ if }(n-2)/2<\beta\le 1/(1+\nu).
\end{aligned}\right.
\end{equation}
Suppose $u$ is a solution of (0.1). Then 
\begin{equation}
u(x,t)\le A_3(T-t)^{\frac{1}{1+\nu}}(1+|x|^2)^{-\beta}\quad\forall x\in\R^n,
0<t<T.
\end{equation}
\end{thm}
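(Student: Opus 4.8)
The plan is to exhibit the right-hand side of (2.2) as an explicit supersolution of (0.1) and then dominate $u$ by it via the comparison principle Lemma 1.4. Write
\[
\phi(x,t)=A_3(T-t)^{\frac{1}{1+\nu}}(1+|x|^2)^{-\beta},
\]
set $w=1+|x|^2\ge 1$ and $s=T-t\in(0,T]$, and note that by hypothesis $\phi(x,0)\ge u_0(x)$ on $\R^n$, so it suffices to prove $u\le\phi$. First I would record
\[
\phi_t=-\frac{A_3}{1+\nu}s^{-\frac{\nu}{1+\nu}}w^{-\beta},\qquad
\phi^{-\nu}=A_3^{-\nu}s^{-\frac{\nu}{1+\nu}}w^{\beta\nu},
\]
and, using $|x|^2=w-1$,
\[
\Delta\phi=A_3 s^{\frac{1}{1+\nu}}\Big\{2\beta(2\beta+2-n)w^{-\beta-1}-4\beta(\beta+1)w^{-\beta-2}\Big\},
\]
so that the supersolution inequality $\Delta\phi-\phi^{-\nu}-\phi_t\le 0$ becomes
\[
A_3 s^{\frac{1}{1+\nu}}\Big\{2\beta(2\beta+2-n)w^{-\beta-1}-4\beta(\beta+1)w^{-\beta-2}\Big\}
+s^{-\frac{\nu}{1+\nu}}w^{-\beta}\Big[\tfrac{A_3}{1+\nu}-A_3^{-\nu}w^{\beta(1+\nu)}\Big]\le 0.
\]

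Next I would split into the two cases of (2.1). When $0<\beta\le\min((n-2)/2,1/(1+\nu))$ one has $2\beta+2-n\le 0$, so the whole $\Delta\phi$-bracket is $\le 0$; and the choice $A_3=(1+\nu)^{1/(1+\nu)}$ gives $A_3/(1+\nu)=A_3^{-\nu}=(1+\nu)^{-\nu/(1+\nu)}$, whence the second bracket equals $(1+\nu)^{-\nu/(1+\nu)}(1-w^{\beta(1+\nu)})\le 0$ because $w\ge 1$, so both terms are nonpositive. When $(n-2)/2<\beta\le 1/(1+\nu)$ the coefficient $2\beta+2-n$ is positive; here I would discard the negative $w^{-\beta-2}$ term, bound the leading positive term using $s\le T$ and $w^{-1}\le 1$, then divide by $s^{-\nu/(1+\nu)}w^{-\beta}$ (using $\tfrac{1}{1+\nu}+\tfrac{\nu}{1+\nu}=1$) to reduce the claim to
\[
A_3\cdot 2\beta(2\beta+2-n)T+\frac{A_3}{1+\nu}-A_3^{-\nu}w^{\beta(1+\nu)}\le 0;
\]
since $w^{\beta(1+\nu)}\ge 1$ this follows from the worst case $w=1$, and multiplying that case by $A_3^{\nu}$ gives $A_3^{1+\nu}\big[(1+\nu)2\beta(2\beta+2-n)T+1\big]/(1+\nu)=1$, which is exactly the definition of $A_3$ in (2.1). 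Thus $\phi$ is a supersolution of (0.1) in $\R^n\times(0,T)$ in both cases.

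Finally I would run the comparison. Fix $0<\delta<T$ and work on $\R^n\times(0,T-\delta)$. As in the proof of Theorem 1.5, set $g=(u^{-\nu}-\phi^{-\nu})/(\phi-u)$ (extended by $\nu\phi^{-(1+\nu)}$ where $\phi=u$), so that $-\phi^{-\nu}+u^{-\nu}=g(\phi-u)$ and, by the mean value theorem, $g=\nu\xi^{-(1+\nu)}$ for some $\xi$ between $u$ and $\phi$. The definition of a solution gives $u\ge C_1(1+|x|^2)^{-1/(1+\nu)}$ on this strip, while $\beta\le 1/(1+\nu)$ and $w\ge 1$ give $\phi\ge A_3\delta^{1/(1+\nu)}(1+|x|^2)^{-1/(1+\nu)}$ there; hence $\xi\ge c(\delta)(1+|x|^2)^{-1/(1+\nu)}$ and $g\le C(\delta)(1+|x|^2)$, which is the growth hypothesis on $g$ in Lemma 1.4. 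Since $u,\phi\in C^{2,1}$ are bounded below away from $0$ on the strip, $g$ is locally Hölder continuous, and $\phi-u\ge -u\ge -C_2e^{C_2|x|^2}$ by the growth bound in the definition of a solution supplies the last hypothesis. Lemma 1.4 then yields $u\le\phi$ on $\R^n\times(0,T-\delta)$, and letting $\delta\to 0$ gives (2.2). I expect the main obstacle to be the second case of the supersolution check, where the sign of $2\beta+2-n$ works against us and the inequality closes only because of the explicit $T$-dependent choice of $A_3$ together with $s\le T$ and $1+|x|^2\ge 1$; the other delicate point is arranging $g\le C(1+|x|^2)$ in Lemma 1.4, which relies on combining the intrinsic lower bound $u\ge C_1(1+|x|^2)^{-1/(1+\nu)}$ for solutions of (0.1) with the structural restriction $\beta\le 1/(1+\nu)$.
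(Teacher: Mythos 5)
Your proposal is correct and takes essentially the same route as the paper: the same explicit supersolution $\psi_3=A_3(T-t)^{\frac{1}{1+\nu}}(1+|x|^2)^{-\beta}$, the same two-case verification keyed to the definition (2.1) of $A_3$ (with the case $(n-2)/2<\beta\le 1/(1+\nu)$ closing exactly because $A_3^{1+\nu}\bigl[1+2\beta(1+\nu)(2\beta+2-n)T\bigr]=1+\nu$), and comparison via Lemma 1.4. The only differences are cosmetic: your grouping carries the correct exponent $w^{\beta(1+\nu)}$ (the paper's (2.3) has a slip, writing $2+\beta(1-\nu)$ where the computation gives $2+\beta(1+\nu)$, which is why the paper's displayed bound appears to need $0<\nu\le 1$ while your version never uses it), and you spell out the mean-value-theorem and growth-bound hypotheses of Lemma 1.4 that the paper leaves implicit.
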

\begin{proof}
Suppose $u$ is a solution of (0.1). Let $\psi_3(x,t)
=A_3(T-t)^{\frac{1}{1+\nu}}(1+|x|^2)^{-\beta}$. Then 
$u_0\le\psi_3(x,0)$ on $\R^n$. By direct computation,
$$
\Delta\psi_3
=A_3(T-t)^{\frac{1}{1+\nu}}[2\beta (2\beta +2-n)(1+|x|^2)^{-\beta-1}
-4\beta (\beta+1)(1+|x|^2)^{-\beta-2}]
$$
and
$$
\psi_{3,t}=-\frac{A_3}{1+\nu}(T-t)^{-\frac{\nu}{1+\nu}}(1+|x|^2)^{-\beta}.
$$
Hence
\begin{align}
&\Delta\psi_3-\psi_3^{-\nu}-\psi_{3,t}\nonumber\\
=&A_3(T-t)^{\frac{1}{1+\nu}}(1+|x|^2)^{-\beta-2}\biggl (
2\beta (2\beta +2-n)(1+|x|^2)-4\beta (\beta+1)\nonumber\\
&\qquad-A_3^{-\nu-1}
\frac{(1+|x|^2)^{2+\beta (1-\nu)}}{T-t}+\frac{1}{1+\nu}\cdot
\frac{(1+|x|^2)^2}{T-t}\biggr )\nonumber\\
\le&A_3(T-t)^{\frac{1}{1+\nu}}(1+|x|^2)^{-\beta-2}\biggl (
2\beta (2\beta +2-n)(1+|x|^2)-4\beta (\beta+1)\nonumber\\
&\qquad+\frac{(1+|x|^2)^2}{T-t}\cdot\biggl (\frac{1}{1+\nu}-A_3^{-\nu-1}
\biggr )\biggr ).
\end{align}
By (2.1) if $0<\beta\le\min((n-2)/2,1/(1+\nu))$, then the right hand side of
(2.3) is $\le 0$. If $(n-2)/2<\beta\le 1/(1+\nu)$, let
$$
\3=\frac{1}{1+2\beta(1+\nu)(2\beta+2-n)T}.
$$
Then by (2.1),
$$
A_3=(\3(1+\nu))^{\frac{1}{1+\nu}}
=\biggl (\frac{1-\3}{2\beta(2\beta+2-n)T}
\biggr )^{\frac{1}{1+\nu}}.
$$
Hence
\begin{equation}
\frac{1}{1+\nu}=A_3^{-\nu-1}\3
\end{equation}
and
\begin{equation}
2\beta (2\beta +2-n)=A_3^{-\nu-1}\frac{1-\3}{T}. 
\end{equation}
By (2.4) and (2.5) when $(n-2)/2<\beta\le 1/(1+\nu)$, the right side
of (2.3) is $\le 0$. Hence $\psi_3$ is a supersolution of (0.1). Then 
by Lemma 1.4,
$$
u\le\psi_3\quad\mbox{ in }\R^n\times (0,T)
$$
and (2.2) follows.
\end{proof}

\begin{thm}
Let $\nu>0$, $0<u_0\in C(\R^n)\cap L^{\infty}(\R^n)$, and
$T=\|u_0\|_{L^{\infty}(\R^n)}^{1+\nu}/(1+\nu)$. Suppose $u$ is a solution 
of (0.1). Then 
\begin{equation}
u(x,t)\le (1+\nu)^{\frac{1}{1+\nu}}(T-t)^{\frac{1}{1+\nu}}\quad\forall
x\in\R^n,0<t<T.
\end{equation}
\end{thm}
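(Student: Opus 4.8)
The plan is to bound $u$ from above by the spatially homogeneous supersolution coming from the pure reaction ODE $\psi'=-\psi^{-\nu}$. Accordingly I would set $\psi(x,t)=(1+\nu)^{1/(1+\nu)}(T-t)^{1/(1+\nu)}$, which depends only on $t$. A direct differentiation gives
$$\psi_t=-(1+\nu)^{-\nu/(1+\nu)}(T-t)^{-\nu/(1+\nu)},\qquad \psi^{-\nu}=(1+\nu)^{-\nu/(1+\nu)}(T-t)^{-\nu/(1+\nu)},$$
so that $\psi_t=-\psi^{-\nu}$. Since $\psi$ is constant in $x$ we have $\Delta\psi=0$, and therefore $\psi_t=\Delta\psi-\psi^{-\nu}$ holds in $\R^n\times(0,T)$; in particular $\psi$ is a supersolution of (0.1).

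The second step is to compare the initial data. Using the definition $T=\|u_0\|_{L^{\infty}(\R^n)}^{1+\nu}/(1+\nu)$ one computes $\psi(x,0)=((1+\nu)T)^{1/(1+\nu)}=\|u_0\|_{L^{\infty}(\R^n)}\ge u_0(x)$ for all $x$. Thus $u$ and $\psi$ are, respectively, a subsolution and a supersolution of (0.1) with correctly ordered initial values, and the desired estimate (2.6) is precisely the inequality $u\le\psi$.

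It then remains to promote the ordering at $t=0$ to the ordering $u\le\psi$ throughout $\R^n\times(0,T)$ by means of Lemma 1.4, and this is where I expect the only genuine work. With $f(s)=-s^{-\nu}$ the mean value theorem gives $f(\psi)-f(u)=\nu\xi^{-(1+\nu)}(\psi-u)$ for some $\xi$ lying between $u$ and $\psi$, so I would take $g(x,t)=\nu\xi^{-(1+\nu)}$ as the coefficient in Lemma 1.4. To verify the hypothesis $g(x,t)\le C(1+|x|^2)$ I would fix $\delta\in(0,T)$ and argue on $\R^n\times(0,T-\delta)$, where $\psi$ is bounded below by a positive constant and, by the lower bound $C_1(1+|x|^2)^{-1/(1+\nu)}\le u$ built into the definition of a solution of (0.1), we get $\xi\ge\min(u,\psi)\ge C(\delta)(1+|x|^2)^{-1/(1+\nu)}$ and hence $g\le C(\delta)(1+|x|^2)$. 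The remaining hypotheses are routine: the one-sided growth bound $\psi-u\ge -Be^{\beta|x|^2}$ follows from $\psi\ge 0$ together with the upper bound $u\le C_2e^{C_2|x|^2}$ in the definition of a solution, and the local Holder continuity of $g$ from that of $u$ and $\psi$. Lemma 1.4 then yields $u\le\psi$ on $\R^n\times(0,T-\delta)$, and since $\delta\in(0,T)$ is arbitrary, (2.6) follows on all of $\R^n\times(0,T)$.
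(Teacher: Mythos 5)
Your proposal is correct and follows essentially the same route as the paper: the paper's proof uses exactly the spatially homogeneous supersolution $\psi_4(x,t)=(1+\nu)^{\frac{1}{1+\nu}}(T-t)^{\frac{1}{1+\nu}}$, observes $u_0\le\psi_4(x,0)$ and $\Delta\psi_4-\psi_4^{-\nu}-\psi_{4,t}=0$, and concludes $u\le\psi_4$ by Lemma 1.4. Your additional verification of the hypotheses of Lemma 1.4 (the coefficient $g=\nu\xi^{-(1+\nu)}$ bounded by $C(1+|x|^2)$ via the lower bound in the definition of a solution, and the one-sided growth bound) is sound and merely makes explicit what the paper leaves implicit.
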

\begin{proof}
Let $\psi_4(x,t)=(1+\nu)^{\frac{1}{1+\nu}}(T-t)^{\frac{1}{1+\nu}}$.
Then $u_0\le\psi_4(x,0)$ on $\R^n$ and 
$$
\Delta\psi_4-\psi_4^{-\nu}-\psi_{4,t}=0\quad\mbox{ in }\R^n\times (0,T).
$$
Hence by Lemma 1.4, 
$$
u\le\psi_4\quad\mbox{ in }\R^n\times (0,T)
$$
and (2.6) follows.
\end{proof}

\begin{thm}
Let $\nu>0$ and let $0<u_0\in C(\R^n)$ satisfies 
(0.8) for some constants $T_1>0$ and $A>0$ satisfying (0.9). Let $u$
be a solution of (0.1). Then there exist constants $T>0$ and $b>0$ such that
\begin{equation}
u(x,t)\le A(b(T-t)+|x|^2)^{\frac{1}{1+\nu}}\quad\mbox{ in }\R^n\times (0,T).
\end{equation}
Hence $u$ vanishes at $x=0$ at time $t=T$.
\end{thm}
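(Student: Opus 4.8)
The plan is to follow the supersolution-plus-comparison method of Theorems 2.1 and 2.2: set $\psi_5(x,t)=A(b(T-t)+|x|^2)^{\frac{1}{1+\nu}}$ and show that for a suitable choice of $b>0$ and $T>0$ this function is a supersolution of (0.1) lying above $u_0$ at $t=0$, and then conclude $u\le\psi_5$ by Lemma 1.4.

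First I would compute $\Delta\psi_5-\psi_5^{-\nu}-\psi_{5,t}$ directly. Writing $w=b(T-t)+|x|^2$ and $\alpha=1/(1+\nu)$, the decisive algebraic fact is the identity $\alpha-1=-\alpha\nu$, which makes the powers $w^{\alpha-1}$ arising from $\Delta\psi_5$ and $\psi_{5,t}$ coincide with the power $w^{-\alpha\nu}$ arising from $\psi_5^{-\nu}$. The Laplacian contributes a genuinely nonpositive term $4\alpha(\alpha-1)A|x|^2w^{\alpha-2}$ because $\alpha<1$; discarding it gives an upper bound, and after factoring out $A^{-\nu}w^{-\alpha\nu}>0$ the sign of the expression is governed by the constant $\frac{A^{1+\nu}(2n+b)}{1+\nu}-1$. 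Thus $\psi_5$ is a supersolution precisely when $b\le(1+\nu)A^{-(1+\nu)}-2n$. Hypothesis (0.9) says $A^{1+\nu}<(1+\nu)/(2n)$, so this upper bound for $b$ is strictly positive; I would fix $b=(1+\nu)A^{-(1+\nu)}-2n>0$ and then set $T=T_1/b$, so that $bT=T_1$ and hence $\psi_5(x,0)=A(T_1+|x|^2)^{\frac{1}{1+\nu}}\ge u_0$ by (0.8).

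It remains to check the admissibility hypotheses of Lemma 1.4 with $\2 u=\psi_5$ and $\5 u=u$. As in the proof of Theorem 1.5, the mean value theorem gives $-\psi_5^{-\nu}+u^{-\nu}=\nu\xi^{-(1+\nu)}(\psi_5-u)$ with $\xi$ lying between $u$ and $\psi_5$, so $g=\nu\xi^{-(1+\nu)}$. The upper growth bound $u\le C_2e^{C_2|x|^2}$ built into the definition of a solution of (0.1) yields the one-sided bound $\psi_5-u\ge-Be^{\beta|x|^2}$. On any slab $\R^n\times(0,T-\delta)$ one has $\psi_5\ge A(b\delta)^{\frac{1}{1+\nu}}>0$ together with the a priori lower bound $u\ge C_1(1+|x|^2)^{-\frac{1}{1+\nu}}$, and since $\xi\ge\min(u,\psi_5)$ these combine to give $\xi^{-(1+\nu)}\le C(1+|x|^2)$, which is exactly the growth restriction demanded of $g$. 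Lemma 1.4 then gives $u\le\psi_5$ on each slab, hence on all of $\R^n\times(0,T)$, which is (2.7). Evaluating at $x=0$ yields $u(0,t)\le A(b(T-t))^{\frac{1}{1+\nu}}\to0$ as $t\to T$, so $u$ vanishes at the origin at time $T$.

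The main obstacle is not the supersolution inequality, which is routine once the exponent identity $\alpha-1=-\alpha\nu$ is noticed, but the verification that $g=\nu\xi^{-(1+\nu)}$ obeys the quadratic growth bound required by Lemma 1.4. The delicacy is that $\xi$ sits between $u$ and $\psi_5$, so one must bound from below the smaller of the two functions; the linear-in-$|x|^2$ growth permitted in Lemma 1.4 matches exactly the $(1+|x|^2)$ factor produced by the lower bound $u\ge C_1(1+|x|^2)^{-\frac{1}{1+\nu}}$, and it is essential to restrict to the slabs $t\le T-\delta$ so that $\psi_5$ stays bounded away from zero before letting $\delta\to0$.
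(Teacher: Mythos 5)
Your proposal is correct and follows essentially the same route as the paper: the identical supersolution $\psi_5(x,t)=A(b(T-t)+|x|^2)^{\frac{1}{1+\nu}}$ with the same choices $b=\frac{1+\nu}{A^{1+\nu}}-2n$ and $T=T_1/b$, the same bound $\Delta\psi_5\le\frac{2nA}{1+\nu}(b(T-t)+|x|^2)^{-\frac{\nu}{1+\nu}}$, and comparison via Lemma 1.4. Your explicit verification of the hypotheses of Lemma 1.4 (the mean value theorem factorization, the one-sided exponential bound, and the slab argument giving $\xi^{-(1+\nu)}\le C(1+|x|^2)$) is a point the paper leaves implicit, and it is carried out correctly.
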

\begin{proof}
Let 
\begin{equation}
b=\frac{1+\nu}{A^{1+\nu}}-2n
\end{equation} 
and $T=T_1/b$. By (0.9) $b>0$. Let $\psi_5(x,t)
=A(b(T-t)+|x|^2)^{\frac{1}{1+\nu}}$. Suppose $u$ is a solution of (0.1). 
By (0.8) 
\begin{equation}
u_0\le\psi_5(x,0)\quad\mbox{ on }\R^n.
\end{equation} 
By direct computation,
\begin{equation*}
\Delta\psi_5\le\frac{2nA}{1+\nu}(|x|^2+b(T-t))^{-\frac{\nu}{1+\nu}}
\quad\mbox{ in }\R^n\times (0,T).
\end{equation*}
Hence by (2.8),
\begin{align}
&\Delta\psi_5-\psi_5^{-\nu}-\psi_{5,t}\nonumber\\
\le&\frac{2nA}{1+\nu}(|x|^2+b(T-t))^{-\frac{\nu}{1+\nu}}
-A^{-\nu}(|x|^2+b(T-t))^{-\frac{\nu}{1+\nu}}
+\frac{bA}{1+\nu}(|x|^2+b(T-t))^{-\frac{\nu}{1+\nu}}\nonumber\\
\le&0\quad\mbox{ in }\R^n\times (0,T).
\end{align}
By (2.9), (2.10) and Lemma 1.4,
\begin{equation*}
u\le\psi_5\quad\mbox{ on }\R^n\times (0,T)
\end{equation*} 
and (2.8) follows.
\end{proof}

\end{document}